\newtheorem{thm}{Theorem}[section]
\newtheorem{lem}[thm]{Lemma}        
\newtheorem{conj}[thm]{Conjecture}
\newtheorem{exam}{Example}
\newtheorem*{abspr*}{Abstract Proposition}
\newtheorem*{abslemma*}{Abstract Lemma}
\newtheorem*{mthm*}{Main Theorem}
\newtheorem*{thm*}{Theorem}
\newtheorem*{defi*}{Definition}
\newtheorem*{lem*}{Lemma}        
\newtheorem*{cor*}{Corollary}
\newtheorem*{prop*}{Proposition}
\newtheorem{rem}{{\it Remark}} 
\newtheorem*{conj*}{Conjecture}
\theoremstyle{plain} 
\newcommand{\thistheoremname}{}
\newtheorem*{genericthm*}{\thistheoremname}
\newenvironment{namedthm*}[1]
{\renewcommand{\thistheoremname}{#1}%
	\begin{genericthm*}}
	{\end{genericthm*}}
\newcommand{\leqnomode}{\tagsleft@true\let\veqno\@@leqno}
\newcommand{\reqnomode}{\tagsleft@false\let\veqno\@@eqno}
\begin{document}
	\title{Muller type irreducibility criterion for generalized principal series }
	\author{CAIHUA LUO}
	\address{Department of Mathematical Sciences, Chalmers University of Technology and the University of Gothenburg, Chalmers Tv\"{a}rgata 3, SE-412 96 G\"{o}teborg}
	\email{caihua@chalmers.se}
	\date{}
	\subjclass[2010]{22E35,22E50}
	\keywords{Principal series, Generalized principal series, Irreducibility, Jacquet module, Intertwining operator, R-group, Weyl group, Coxeter group, Covering groups}
	\begin{abstract}
		We obtain an irreducibility criterion for generalized principal series, extending known and frequently employed results for principal series. Our approach rests on a newly observed semi-direct product decomposition of the relative Weyl group and its action on generalized principal series, in conjunction with the theory of Jacquet module. We thus circumvent the obstacle that the Weyl group of a general Levi subgroup is not a Coxeter group. Our statements on irreducibility are formulated in terms of a subgroup of the Knapp--Stein R-group, which arises naturally from our decomposition of the Weyl group. The novel subgroup allows us to state a conjecture on general parabolic induction of arbitrary irreducible admissible representations as opposed to generalized principal series, which are associated with supercuspidal ones.
	\end{abstract}
	\maketitle

\section*{introduction}
One of the aspects of the Langlands program concerns harmonic analysis, especially the analysis of the constituents of parabolic inductions following Harish-Chandra's ``philosophy of cusp forms''. Lots of mathematicians have devoted their efforts to this question, like Langlands, Jacquet, Casselman, Knapp--Stein, Bernstein--Zelevinsky, Shahidi, Speh--Vogan, Silberger, Moeglin--Tadi\'{c} etc.

To be precise, let $G$ be a reductive group over a non-archimedean local field $F$ of characteristic 0. Following from Harish-Chandra's ``philosophy of cusp forms'', there are two main problems in the study of irreducible admissible representations of $G$:
\begin{enumerate}[(i)]
	\item The description of supercuspidal representations of $G$.
	\item The study of generalized principal series. More precisely, let $P=MN$ be a parabolic subgroup of $G$ with $M$ its Levi subgroup, and $\sigma$ be a supercuspidal representation of $M$. Denote by $Ind^G_P(\sigma)$ the normalized parabolic induction from $P$ to $G$ (the so-called generalized principal series). The problem is to study the conditions of irreducibility of $Ind^G_P(\sigma)$ and the constituent structure if reducible. 
\end{enumerate} 
In this paper, we provide an unsurprising, but unexploited irreducibility criterion for generalized principal series. Instead of using the classical Knapp--Stein R-group, our main result is formulated by adopting a modified $R$-group which arises naturally as a subgroup of the Knapp--Stein R-group (please see Lemma \ref{rgp} for the detail).
\begin{namedthm*}{Main Theorem}
	The following two statements are equivalent:
	\begin{enumerate}[(i)]
		\item $Ind^G_P(\sigma)$ is irreducible.
		\item $R_{\sigma}=\{1\}$ and all co-rank one inductions $Ind_{P\cap M_\alpha}^{M_\alpha}(\sigma)$ are irreducible for $\alpha\in \Phi_M^0$. 
	\end{enumerate}
\end{namedthm*}
In the case of $G=GL_n$, this is Bernstein's irreducibility criterion \cite[Theorem 4.2]{bernstein1977induced}. When $\sigma$ is unitary supercuspidal, this is the famous Knapp--Stein R-group theory (cf. \cite{knapp1975singular,silberger1978knapp}). When $\sigma$ is regular supercuspidal, it is well-known in \cite{rodier1981decomposition} (for principal series) and \cite[Theorem 5.4.3.7]{silberger2015introduction} (for generalized principal series). When $\sigma$ is unitary regular supercuspidal, the induction is always irreducible by a theorem of Bruhat in \cite[Theorem 6.6.1]{casselman1995introduction}. Further conditional results for principal series were given in \cite{casselman1980unramified,rodier1981decomposition,winarsky1978reducibility,keys1982decomposition,keys1982reducibility,tadic1994representations}, but a complete answer for principal series was given first by Muller \cite{muller1979integrales} and later by Kato via a different method \cite{kato1982irreducible}. For this reason, we refer to our Main Theorem as a Muller type irreducibility criterion. We would like to mention that Speh--Vogan had a clean, beautiful irreducibility criterion for (regular) generalized principal series of reductive Lie groups about 40 years ago (cf. \cite{speh1977reducibility,speh1980reducibility}).

As an application, our Main theorem provides us a way to prove Lapid--Tadi\'{c}'s conjecture \cite[Conjecture 1.3]{lapid2017some} which we hope to address in a future work. At last, we want to mention that the novelties of the proof of the Main Theorem are the following two key observations:
\begin{enumerate}[(i)]
	\item (cf. Lemma \ref{key1}) \[W_M=W_M^0\rtimes W_M^1. \]
	\item (cf. Lemma \ref{key2}) For $w\in W_M^0$ and $w_1\in W_M^1$, we have, 
	\[Ind^G_P(\sigma)^w\simeq Ind^G_P(\sigma)^{ww_1}. \]
\end{enumerate}
Very recently, we learned that the first observation is an old result of Lusztig and has since been applied creatively in \cite[Lemma 5.2]{lusztig1976coxeter}, \cite[Corollary 2.3]{howlett1980induced}, \cite{morris1993tamely} and \cite{brink1999normalizers}. Furthermore, in view of the semi-direct product decomposition of the Knapp--Stein $R$-group (see Lemma \ref{rgp}), we want to propose the following ambitious conjecture. Let $\tau$ be an irreducible admissible representation of $M$ of the parabolic subgroup $P=MN$ of $G$, we define the modified group $R_\tau$ of $\tau$ in $G$ to be our modified $R$-group of its supercuspidal support.
\begin{namedthm*}{Main Conjecture} (i.e. Conjecture \ref{gmaxim})
	Assume $R_\tau=\{1\}$. Then the following are equivalent: 
	\begin{enumerate}[(i)]
		\item $Ind^G_P(\tau)$ is irreducible.
		\item Co-rank one inductions are irreducible, i.e. $Ind^{M_\alpha}_{M_\alpha\cap P}(\tau)$ is irreducible for any $\alpha\in \Phi_M$.
	\end{enumerate} 
\end{namedthm*}
Very recently, we learned that Jantzen had proved our Main Conjecture for essentially discrete series representation $\tau$ of a Levi group of split groups $Sp_{2n}$ and $SO_{2n+1}$ about 20 years ago for which $R_\tau$ is always trivial (see \cite[Theorem 3.3]{jantzen1996reducibility}). Inspired by his beautiful argument, we would like to see how far we could move forward in our future work.

Let us end the introduction by saying briefly the structure of the paper. In Section 1, we recall some basic notions. In Section 2, we formulate our irreducibility criterion for generalized principal series in terms of Muller's irreducibility criterion for principal series, while in the last, we prepare some necessary observations/facts which play an essential role in the proof and emphasize a history on some special cases of the irreducibility criterion. In Section 3, we give a proof of our Main theorem via Jacquet module argument and the theory of intertwining operator which provides us a third proof of Muller's irreducibility criterion for principal series \cite{muller1979integrales,kato1982irreducible}. In the last section, inspired by our irreducibility criterion, we propose an ambitious irreducibility criterion conjecture and discuss some supportive examples.

		\paragraph*{\textbf{Acknowledgements}}
		I am much indebted to Professor Wee Teck Gan for his guidance and numerous discussions on various topics. I would like to thank Martin Raum for his kind help and support, and thank Maxim Gurevich for his seminar talk on a conjectural criterion of the irreducibility of parabolic inductions for $GL_n$ in the National University of Singapore which rekindles our enthusiasm to explore the mysterious internal structure of parabolic induction.
\section{preliminaries}
Let $G$ be a connected reductive group defined over a non-archimedean local field $F$ of characteristic 0. Denote by $|-|_F$ the absolute value, by $\mathfrak{w}$ the uniformizer and by $q$ the cardinality of the residue field of $F$. Fix a minimal parabolic subgroup $B=TU$ of $G$ with $T$ a minimal Levi subgroup and $U$ a maximal unipotent subgroup of $G$, and let $P=MN$ be a standard parabolic subgroup of $G$ with $M$ the Levi subgroup and $N$ the unipotent radical.

Let $X(M)_F$ be the group of $F$-rational characters of $M$, and set 
\[\mathfrak{a}_M=Hom(X(M)_F,\mathbb{R}),\qquad\mathfrak{a}^\star_{M,\mathbb{C}}=\mathfrak{a}^\star_M\otimes_\mathbb{R} \mathbb{C}, \]
where
\[\mathfrak{a}^\star_M=X(M)_F\otimes_\mathbb{Z}\mathbb{R} \]
denotes the dual of $\mathfrak{a}_M$. Recall that the Harish-Chandra homomorphism $H_P:M\longrightarrow\mathfrak{a}_M$ is defined by
\[q^{\left< \chi,H_P(m)\right>}=|\chi(m)|_F \] 
for all $\chi\in X(M)_F$.

Next, let $\Phi$ be the root system of $G$ with respect to $T$, and $\Delta$ be the set of simple roots determined by $U$. For $\alpha\in \Phi$, we denote by $\alpha^\vee$ the associated coroot, and by $w_\alpha$ the associated reflection in the Weyl group $W=W^G$ of $T$ in $G$ with
\[W=N_G(T)/T=\left<w_\alpha:~\alpha\in\Phi\right>. \]
Denote by $w_0^G$ the longest Weyl element in $W$, and similarly by $w_0^M$ the longest Weyl element in the Weyl group $W^M$ of the Levi subgroup $M$. 

Likewise, we denote by $\Phi_M$ the set of reduced relative roots of $M$ in $G$, by $\Delta_M$ the set of relative simple roots determined by $N$ and by $W_M:=N_G(M)/M$ the relative Weyl group of $M$ in $G$. In general, a relative reflection $\omega_\alpha:=w_0^{M_\alpha}w_0^M$ with respect to a relative root $\alpha$ does not preserve our Levi subgroup $M$. Denote by $\Phi^0_M$ (resp. $(\Phi_M^{0})^+$) the set of those relative (resp. positive) roots which contribute reflections in $W_M$. It is easy to see that $W_M$ preserves $\Phi_M$, and further $\Phi_M^0$ as well, as $\omega_{w.\alpha}=w\omega_\alpha w^{-1}$. Note that $W_M$ in general is larger than the one generated by those relative reflections.

For our purpose, we define the ``small'' relative Weyl group $W_M^0\subset W_M$ to be the one generated by those relative reflections, i.e.
\[W_M^0:=\left<w_\alpha:~\alpha\in \Phi^0_M \right>. \]
Denote by $\Delta_M^0$ the relative simple roots of $\Phi_M^0$. 

Recall that the canonical pairing $$\left<-,-\right>:~\mathfrak{a}^\star_M\times \mathfrak{a}_M\longrightarrow\mathbb{Z}$$ suggests that each  $\alpha\in \Phi_M$ will enjoy a one parameter subgroup $H_{\alpha^\vee}(F^\times)$ of $M$ satisfying: for $x\in F^\times$ and $\beta\in \mathfrak{a}^\star_M$,
\[\beta_\alpha(x):=\beta(H_{\alpha^\vee}(x))=x^{\left<\beta,\alpha^\vee\right>}. \]

For $P=MN$ a parabolic subgroup of $G$ and an admissible representation $(\sigma,V_\sigma)~ (resp.~(\pi,V_\pi))$ of $M~(resp.~G)$, we have the following normalized parabolic induction from $P$ to $G$ which is a representation of $G$
\[Ind_P^G(\sigma):=\{\mbox{smooth }f:G\rightarrow V_\sigma|~f(nmg)=\delta_P(m)^{1/2}\sigma(m)f(g), \forall n\in N, m\in M~and~g\in G\}, \]
where $\delta_P$ stands for the modulus character of $P$, i.e., denote by $\mathfrak{n}$ the Lie algebra of $N$,
\[\delta_P(nm)=|det~Ad_\mathfrak{n}(m)|_F, \]
and the normalized Jacquet module $J_M(\pi)$ with respect to $P$ as a representation of $M$ is defined by
\[\pi_N:=V/\left<\pi(n)e-e:~n\in N,e\in V_\pi\right>. \] 
Given an irreducible unitary admissible representation $\sigma$ of $M$ and $\nu\in \mathfrak{a}^\star_{M}$, let $I_P^G(\nu,\sigma)$ be the representation of $G$ induced by $\sigma$ and $\nu$ as follows:
\[I^G_P(\nu,\sigma)=Ind^G_P(\sigma\otimes \nu)=Ind_P^G(\sigma\otimes q^{\left<\nu,H_P(-)\right>}) .\]
Define the action of $w\in W_M$ on representations $\sigma\otimes \nu$ of $M$ to be $w.(\sigma\otimes \nu):=(\sigma\otimes \nu)\circ Ad(w^{-1})$ and $(\sigma\otimes \nu)^w:=(\sigma\otimes \nu)\circ Ad(w)$.
\section{generalized principal series}
In this section, we first revisit Muller's irreducibility criterion for principal series of split groups and reformulate it for generalized principal series of general $G$, then recall some history concerning some special cases of such criterion and prepare some necessary structure theory for later use.

In \cite{muller1979integrales}, she defines a subgroup $W_\lambda^1$ of the Weyl group $W$ governing the reducibility of the ``unitary'' part of principal series on the Levi level, which is indeed the Knapp--Stein $R$-group as follows (cf. \cite{winarsky1978reducibility,keys1982decomposition}), for the principal series $Ind^G_B(\lambda)$ of a split group $G$,
\begin{align*}
\Phi_{\lambda}^0&:=\{\alpha\in \Phi:~\lambda_\alpha=Id \},\\
W_{\lambda}^0&:=\left<w_\alpha:~\alpha\in \Phi_\lambda^0 \right>,\\
W^1_\lambda&:=\{w\in W_\lambda:~w.(\Phi_\lambda^0)^+>0 \},\\
W_\lambda&:=\{w\in W:~w.\lambda=\lambda \}.
\end{align*} 
In view of \cite[Lemma I.1.8]{waldspurger2003formule}, one has
\[W_\lambda=W_\lambda^0\rtimes W_\lambda^1. \]
Following the Knapp--Stein R-group theory (cf. \cite{silberger2015introduction}), we insist to denote by $R_\lambda$ the subgroup $W_\lambda^1$.

In order to generalize the above notions for generalized principal series, we modify some of the notions in what follows. Recall that given a parabolic subgroup $P=MN$ of $G$, a unitary supercuspidal representation $\sigma$ of $M$ and an unramified character $\nu$ of $M$ in $\mathfrak{a}_M^\star$, one forms a parabolic induction
\[I^G_P(\nu,\sigma):=Ind_P^G(\sigma\otimes \nu). \]
Recall that for $\alpha\in \Phi_M^0$, the associated refection $w_\alpha$ is defined as $w^{M_\alpha}w^M$, where $M_\alpha$ is the co-rank one Levi subgroup determined by $\alpha$, and $w^M (resp.~w^{M_\alpha})$ is the longest Weyl element in the Weyl group $W^M$ (resp. $W^{M_\alpha}$) of $M$ (resp. $M_\alpha$). Also recall that the relative Weyl group $W_M$ of M in G is defined to be
\[W_M:=N_G(M)/M=\{w\in W:~w.M=M \}/W^M (\mbox{cf. \cite[P. 19 \& 104]{silberger2015introduction} or \cite{moeglin_waldspurger_1995}}), \]
and the ``small'' relative Weyl group $W_M^0$ is 
\[W_M^0:=\left<w_\alpha:~\alpha\in \Phi_M^0 \right>, \]
where $\Phi_M^0$ is the set of those reduced relative roots $\alpha$ which contribute a reflection $w_\alpha$ preserving $M$, i.e. $w_\alpha.M=M$. Given these, we can define the analogous notions as follows:
\begin{align*}
\Phi_{\sigma_\nu}^0&:=\{\alpha\in \Phi_M^0:~w_\alpha.(\sigma\otimes \nu)=(\sigma\otimes\nu) \},\\
W_{\sigma_\nu}^0&:=\left<w_\alpha:~\alpha\in \Phi_{\sigma_\nu}^0 \right>,\\
W^1_{\sigma_\nu}&:=\{w\in W_{\sigma_\nu}:~w.(\Phi_{\sigma_\nu}^0)^+>0 \},\\
W_{\sigma_\nu}&:=\{w\in W_M:~w.(\sigma\otimes \nu)=(\sigma\otimes \nu) \}.
\end{align*}
Likewise, via \cite[Lemma I.1.8]{waldspurger2003formule}, we have
\[W_{\sigma_\nu}=W_{\sigma_\nu}^0\rtimes W_{\sigma_\nu}^1, \]
and we denote $R_{\sigma_\nu}$ to be $W_{\sigma_\nu}^1$ following tradition, but it is not the exact $R$-group in the sense of Silberger for generalized principal series, even for principal series, for example 
\[Ind^{SL_2}_T(\chi) ~with ~\chi^2=1 ~but~ \chi\neq 1,\]
in such case, we know that 
\[R_\lambda\simeq \mathbb{Z}/2\mathbb{Z}, ~but~ R_{\sigma_\nu}=\{1\}.\]

As all are well-prepared, now we can state the Muller type irreducibility criterion for generalized principal series as follows:
\begin{mthm*}
	Keep the notions as before. The following two statements are equivalent
	\begin{enumerate}[(i)]
		\item $I^G_P(\nu,\sigma)$ is irreducible.
		\item $R_{\sigma_\nu}=\{1\}$ and all co-rank one inductions $Ind_{P\cap M_\alpha}^{M_\alpha}(\sigma\otimes \nu)$ are irreducible for $\alpha\in \Phi_M^0$. 
	\end{enumerate}
\end{mthm*}
Before turning to the proof in the next section, we first observe some facts which play an essential role later on in what follows.

\begin{lem}\label{key1}
	Keep the notation as above. We have
	\[W_M=W_M^0\rtimes W_M^1, \]
	where $W_M^1$ is defined to be
	\[W_M^1:=\{w\in W_M:~w.(\Phi_M^0)^+>0 \}. \]
	Moreover $\Phi_M^0$ is a relative subroot system, may not be irreducible.
\end{lem}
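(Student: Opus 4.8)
The plan is to realize $W_M^0$ as a normal Coxeter subgroup of $W_M$ and identify its complement with the stabilizer of a suitable "positive chamber" for the subroot system $\Phi_M^0$. First I would verify that $\Phi_M^0$ is a relative subroot system of $\Phi_M$: it is closed under the action of $W_M^0$ since $\omega_{w.\alpha} = w\omega_\alpha w^{-1}$ shows that $w_\alpha$ preserves $M$ whenever $\alpha \in \Phi_M^0$ and $w \in W_M^0$ carries $\alpha$ into another element contributing a reflection preserving $M$; combined with the fact that $\Phi_M^0$ is a symmetric subset of the root system $\Phi_M$ (which is itself a — possibly non-reduced — root system in $\mathfrak{a}_M^\star$), this gives that $\Phi_M^0$ is an abstract root system in the span it generates, not necessarily irreducible. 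Consequently $W_M^0 = \langle w_\alpha : \alpha \in \Phi_M^0\rangle$ is a genuine Coxeter group with simple system $\Delta_M^0$, and it acts simply transitively on the chambers of $\Phi_M^0$.

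Next I would show $W_M^0 \trianglelefteq W_M$. The key is that for $w \in W_M$ and $\alpha \in \Phi_M^0$ we have $w w_\alpha w^{-1} = w_{w.\alpha}$ (the relative-reflection identity), so it suffices to check that $w.\alpha \in \Phi_M^0$ for every $w \in W_M$, i.e. that $W_M$ preserves $\Phi_M^0$ — and this is precisely the observation recorded just before the lemma statement, again using $\omega_{w.\alpha} = w\omega_\alpha w^{-1}$ so that $w_{w.\alpha}$ preserves $M$ iff $w_\alpha$ does. Hence conjugation by any $w \in W_M$ permutes the generating reflections of $W_M^0$, so $W_M^0$ is normal.

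With normality in hand, I would define $W_M^1 := \{w \in W_M : w.(\Phi_M^0)^+ > 0\}$ and prove it is a complement. For the intersection $W_M^0 \cap W_M^1 = \{1\}$: an element of $W_M^0$ fixing the positive system $(\Phi_M^0)^+$ fixes the associated chamber, and a Coxeter group acts simply transitively on its chambers, so such an element is trivial. For $W_M = W_M^0 \cdot W_M^1$: given $w \in W_M$, the set $w.(\Phi_M^0)^+$ is again a positive system for $\Phi_M^0$ (because $w$ preserves $\Phi_M^0$), so by simple transitivity there is a unique $u \in W_M^0$ with $u.(w.(\Phi_M^0)^+) = (\Phi_M^0)^+$; then $w_1 := u w$ satisfies $w_1 \in W_M$ and $w_1.(\Phi_M^0)^+ > 0$, i.e. $w_1 \in W_M^1$, whence $w = u^{-1} w_1 \in W_M^0 W_M^1$. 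Finally, that $W_M^1$ is a subgroup follows formally: it is the stabilizer in $W_M$ of the point $(\Phi_M^0)^+$ under the $W_M$-action on positive systems of $\Phi_M^0$ (well-defined since $W_M$ normalizes $W_M^0$ and preserves $\Phi_M^0$). Together these give the internal semidirect product $W_M = W_M^0 \rtimes W_M^1$.

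The step I expect to be the main obstacle is not any single algebraic identity but rather the structure-theoretic input that $\Phi_M^0$ is honestly a root system: one must be careful that $\Phi_M$ itself need not be reduced and that the reflections $w_\alpha = w_0^{M_\alpha} w_0^M$ are a priori only defined up to the subtlety of whether $\omega_\alpha$ preserves $M$, so pinning down $\Delta_M^0$ and the simple-transitivity of the $W_M^0$-action requires invoking the cited results (e.g. \cite[Lemma I.1.8]{waldspurger2003formule}) rather than a bare-hands argument; I would lean on that reference to supply the chamber geometry and then only assemble the normality and complement statements by hand.
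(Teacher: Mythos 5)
Your proposal is correct and follows essentially the same route as the paper: the paper's proof likewise derives normality of $W_M^0$ and the subroot-system property from the identity $ww_\alpha w^{-1}=w_{w.\alpha}$, and then obtains the complement $W_M^1$ by exactly the positive-system/chamber argument you spell out, which it delegates to the standard Knapp--Stein $R$-group argument of \cite[Lemma I.1.8]{waldspurger2003formule}. You have simply written out in full the details the paper cites by reference.
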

\begin{proof}
	The facts that
	\[W_M^0\lhd W_M, \]
	and
	\[\mbox{$\Phi_M^0$ is a relative subroot system} \]
	are easy corollaries of the following observation
	\[ww_\alpha w^{-1}=w_{w.\alpha}\mbox{ for }\alpha\in \Phi_M^0\mbox{ and }w\in W_M. \]
	The remaining part follows from the same argument for the definition of the Knapp--Stein $R$-group as above (or cf. \cite[Lemma A.2]{luo2018R}).
\end{proof}
As the nature of the Jacquet module argument is to vastly use the induction by stage property of parabolic induction, so we need the following lemma.

Recall that $Z_M$ is the center of $M$, where $M$ is the Levi subgroup of the parabolic subgroup $P=MN$ in $G$. For $\alpha\in \Phi_M^0$, one has the associated coroot $\alpha^\vee$, then we define 
\[\nu_\alpha(x)=\nu(\alpha^\vee(x))\mbox{ for $x\in F^\times$},\] and define
\[\Delta_1:=\{\alpha\in \Phi_M^0:~\nu_\alpha=1 \}. \]
It is easy to see that $\Delta_1$ also forms a relative subroot system. Denote by $W_{\Delta_1}$ the Weyl group generated by $\Delta_1$. Then we have, similar to \cite[Lemma 4.1]{muller1979integrales},
\begin{lem}\label{urd}
	\leavevmode
	\begin{enumerate}[(i)]
		\item $\Delta_1$ admits a base of relative simple roots which is part of the counterpart for $\Phi_M^0$.
		\item $W_{\sigma_\nu}$ is a subgroup of $W_{\Delta_1}$.  
	\end{enumerate}
\end{lem}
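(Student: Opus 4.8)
The plan is to reduce both assertions to two classical facts about the genuine (possibly reducible, but reduced) relative root system $\Phi_M^0$ with Weyl group $W_M^0$: that the stabiliser of a dominant vector in a finite reflection group is the standard parabolic subgroup generated by the simple reflections fixing it, and (Chevalley--Steinberg) that point stabilisers are generated by the reflections they contain. The starting observation is the elementary translation: since $\nu\in\mathfrak a_M^\star$ is real we have $\nu_\alpha(x)=x^{\langle\nu,\alpha^\vee\rangle}$, so
\[
\Delta_1=\{\alpha\in\Phi_M^0:\ \langle\nu,\alpha^\vee\rangle=0\}=\{\alpha\in\Phi_M^0:\ w_\alpha.\nu=\nu\},
\]
the second equality because $w_\alpha.\nu=\nu-\langle\nu,\alpha^\vee\rangle\,\alpha$. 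Thus $\Delta_1$ is exactly the fixed-point subsystem of $\nu$ in $\Phi_M^0$; its being a relative subroot system (noted before the statement) follows as in Lemma~\ref{key1} from $w w_\alpha w^{-1}=w_{w.\alpha}$.

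For (i) I would first move $\nu$ into the closed dominant cone for $(\Phi_M^0)^+$: replacing $(\sigma,\nu)$ by a $W_M^0$-conjugate changes $I_P^G(\nu,\sigma)$ only up to semisimplification --- hence does not affect (ir)reducibility, which is all the Main Theorem needs --- and transports the lemma equivariantly via $w w_\alpha w^{-1}=w_{w.\alpha}$. Granting $\nu$ dominant, the first classical fact gives $\mathrm{Stab}_{W_M^0}(\nu)=W_I$ with $I:=\{\beta\in\Delta_M^0:\ \langle\nu,\beta^\vee\rangle=0\}\subseteq\Delta_M^0$; comparing this with the description of $\Delta_1$ above forces $\Delta_1=\Phi_M^0\cap\mathbb{Z}I$, the standard parabolic subsystem of $\Phi_M^0$ with base $I$. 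That is precisely (i), and it is what produces a standard intermediate parabolic whose Levi carries only the ``unitary directions''.

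For (ii) take $w\in W_{\sigma_\nu}$. Comparing the central characters of $w.(\sigma\otimes\nu)$ and $\sigma\otimes\nu$ and passing to absolute values --- using that $\sigma$ is unitary while $q^{\langle\nu,H_P(-)\rangle}$ is a positive real character --- gives $w.\nu=\nu$, so $w\in\mathrm{Stab}_{W_M}(\nu)$. If in addition $w\in W_M^0$, then by the previous paragraph and Steinberg's theorem $w\in\mathrm{Stab}_{W_M^0}(\nu)=W_I=\langle w_\alpha:\alpha\in\Delta_1\rangle=W_{\Delta_1}$, which is the claim; note that for the ``small'' part this needs nothing extra, since $w_\alpha.(\sigma\otimes\nu)\cong\sigma\otimes\nu$ forces $w_\alpha.\nu=\nu$, i.e.\ $\Phi_{\sigma_\nu}^0\subseteq\Delta_1$, whence $W_{\sigma_\nu}^0\subseteq W_{\Delta_1}$. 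The delicate point --- which I expect to be the real obstacle --- is therefore the inclusion $W_{\sigma_\nu}\subseteq W_M^0$, i.e.\ that the non-Coxeter part $W_M^1$ from Lemma~\ref{key1} contributes nothing to $W_{\sigma_\nu}$. I would try to settle it either (a) by observing that in every application of Lemma~\ref{urd} one has already reduced to $R_{\sigma_\nu}=W_{\sigma_\nu}^1=\{1\}$, so that $W_{\sigma_\nu}=W_{\sigma_\nu}^0\subseteq W_M^0$ outright; or (b) more intrinsically, by ruling out that a nontrivial element of $W_M^1$ --- which would act on $\Phi_M^0$ as a nontrivial diagram automorphism fixing the dominant vector $\nu$ --- can stabilise a supercuspidal $\sigma\otimes\nu$ of $M$, tracking its effect on the Bernstein/supercuspidal datum of $\sigma$.
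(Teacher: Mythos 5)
Your part (i) is correct and is, in effect, the detailed version of what the paper leaves implicit: the paper's own proof of Lemma \ref{urd} is the single sentence that both claims ``follow from the fact that $\Phi_M^0$ is a relative subroot system''. Your chain --- $\nu_\alpha=1$ iff $\langle\nu,\alpha^\vee\rangle=0$ iff $w_\alpha.\nu=\nu$, conjugate $\nu$ into the closed dominant chamber for $(\Phi_M^0)^+$ (harmless for every use of the lemma), then use that for dominant $\nu$ one has $\{\alpha\in\Phi_M^0:\langle\nu,\alpha^\vee\rangle=0\}=\Phi_M^0\cap\mathbb{Z}I$ with $I\subseteq\Delta_M^0$ and $\mathrm{Stab}_{W_M^0}(\nu)=W_I$ --- is exactly the content of the appeal to Muller's Lemma 4.1 that the paper is leaning on. Likewise your observation that $w.(\sigma\otimes\nu)\cong\sigma\otimes\nu$ forces $w.\nu=\nu$ by unitarity of $\sigma$ is correct and is a genuinely needed ingredient for (ii) that the paper does not spell out.

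The gap you flag in (ii) is real, and neither of your proposed repairs closes it. Since $W_{\Delta_1}=\langle w_\alpha:\alpha\in\Delta_1\rangle\subseteq W_M^0$, the inclusion $W_{\sigma_\nu}\subseteq W_{\Delta_1}$ does require $W_{\sigma_\nu}\cap W_M^1=\{1\}$ in the decomposition of Lemma \ref{key1}. Repair (a) is circular exactly where it is needed: in the necessity direction of the Main Theorem, Lemma \ref{urd}(ii) is invoked in order to \emph{prove} $R_{\sigma_\nu}=\{1\}$, so one cannot assume it there (whereas in the sufficiency direction $R_{\sigma_\nu}=W^1_{\sigma_\nu}=\{1\}$ does give $W_{\sigma_\nu}=W^0_{\sigma_\nu}=\langle w_\alpha:\alpha\in\Phi^0_{\sigma_\nu}\rangle\subseteq W_{\Delta_1}$, so (a) suffices for that half). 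Repair (b) asks to rule out nontrivial elements of $W_M^1$ stabilising $\sigma\otimes\nu$; this cannot be expected in general --- the paper itself contemplates such elements (step (ii) of the sufficiency proof explicitly splits the $w\in W_M^1$ according to whether they lie in $W_{\sigma_\nu}$), and in the unitary case $\nu=0$ the claim would amount to a nontrivial assertion about Knapp--Stein $R$-groups that certainly does not follow from ``$\Phi_M^0$ is a subroot system''. So your argument does not establish (ii) in the strength in which the paper uses it; but neither does the paper's one-line proof, which addresses neither the passage from $w.(\sigma\otimes\nu)=\sigma\otimes\nu$ to $w.\nu=\nu$ nor the $W_M^0$-versus-$W_M$ issue. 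You have correctly isolated the missing step rather than overlooked it; what is still owed is either a proof that $W_{\sigma_\nu}\subseteq W_M^0$, or a reformulation of the lemma (and of the reduction to $M_{\Delta_1}$ in the necessity argument) that tolerates a nontrivial $W_M^1$-part.
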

\begin{proof}
	They follow from the fact that 
	\[\mbox{$\Phi_M^0$ is a relative subroot system.} \]
\end{proof}

Observe that $Ind_M^{M_{\Delta_1}}(\sigma\otimes\nu)\simeq Ind_M^{M_{\Delta_1}}(\sigma)\otimes\nu$ is unitary after twisting by the central character $\nu$ of $M_{\Delta_1}$. In view of Lemma \ref{urd}, it is quite natural to define the Knapp--Stein $R$-group of $(G,~I^G_P(\nu,\sigma))$ in terms of the $R$-group of $(M_{\Delta_1},~Ind^{M_{\Delta_1}}_M(\sigma\otimes \nu))$, i.e. $(M_{\Delta_1},~Ind^{M_{\Delta_1}}_M(\sigma))$. Then the next question is to see how far $R_{\sigma_\nu}$ differs from the Knapp--Stein $R$-group. Like the Knapp--Stein R-group theory, we define $W_{\sigma_\nu}^{0'}$ to be the normal subgroup of $W_{\sigma_\nu}^0$ which governs the unitary co-rank one irreducibility of $I^G_P(\nu,\sigma)$, i.e.
\[W_{\sigma_\nu}^{0'}:=\left<w_\alpha\in \Phi_{\sigma_\nu}^0: \mu_\alpha(\sigma)=0 \right>, \]
where $\mu_\alpha(-)$ is the co-rank one Plancherel measure associated to $\alpha$ (please refer to \cite[Section V.2]{waldspurger2003formule} for details).
Thus we have
\[W_{\sigma_\nu}=W_{\sigma_\nu}^{0'}\rtimes R'_{\sigma_\nu}, \]
which in turn implies that  
\begin{lem}\label{rgp}
	Keep the notions as above. We have
	\[R'_{\sigma_\nu}=R_{\sigma_\nu}^0\rtimes R_{\sigma_\nu}, \]
	where 
	\[R_{\sigma_\nu}^0\simeq W_{\sigma_\nu}^0/W_{\sigma_\nu}^{0'}\hookrightarrow R'_{\sigma_\nu}.\]
\end{lem}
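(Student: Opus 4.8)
The plan is to obtain the decomposition of $R'_{\sigma_\nu}$ entirely from the two semi-direct product decompositions of $W_{\sigma_\nu}$ already at our disposal, namely
\[W_{\sigma_\nu}=W_{\sigma_\nu}^0\rtimes R_{\sigma_\nu}\qquad\text{and}\qquad W_{\sigma_\nu}=W_{\sigma_\nu}^{0'}\rtimes R'_{\sigma_\nu},\]
combined with two inclusions, $W_{\sigma_\nu}^{0'}\subseteq W_{\sigma_\nu}^0$ and $R_{\sigma_\nu}\subseteq R'_{\sigma_\nu}$. Once these inclusions are in place the statement is pure (finite) group theory about a group carrying two compatible semi-direct decompositions whose normal factors are nested.

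First I would establish the two inclusions. The inclusion $W_{\sigma_\nu}^{0'}\subseteq W_{\sigma_\nu}^0$ is immediate, since $W_{\sigma_\nu}^{0'}$ is generated by a subset of the reflections $\{w_\alpha:\alpha\in\Phi_{\sigma_\nu}^0\}$ that generate $W_{\sigma_\nu}^0$. For $R_{\sigma_\nu}\subseteq R'_{\sigma_\nu}$ I would invoke the Harish-Chandra structure theory of co-rank one Plancherel measures (cf. \cite[Section V.2]{waldspurger2003formule}, \cite{silberger2015introduction}): the set $\Phi'=\{\alpha\in\Phi_{\sigma_\nu}^0:\mu_\alpha(\sigma)=0\}$ is a relative subroot system with $W_{\sigma_\nu}^{0'}=\langle w_\alpha:\alpha\in\Phi'\rangle$, and the Knapp--Stein $R$-group admits the positivity description $R'_{\sigma_\nu}=\{w\in W_{\sigma_\nu}:w.(\Phi')^+>0\}$ for the positive system $(\Phi')^+=\Phi'\cap(\Phi_{\sigma_\nu}^0)^+$. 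Since $(\Phi')^+\subseteq(\Phi_{\sigma_\nu}^0)^+$, any $w\in W_{\sigma_\nu}$ with $w.(\Phi_{\sigma_\nu}^0)^+>0$ satisfies a fortiori $w.(\Phi')^+>0$; hence $R_{\sigma_\nu}=W_{\sigma_\nu}^1\subseteq R'_{\sigma_\nu}$.

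Next I would run the modular-law argument. Both $W_{\sigma_\nu}^0$ and $W_{\sigma_\nu}^{0'}$ are normal in $W_{\sigma_\nu}$, so $W_{\sigma_\nu}^{0'}\lhd W_{\sigma_\nu}^0$; applying Dedekind's modular law to $W_{\sigma_\nu}=W_{\sigma_\nu}^{0'}\rtimes R'_{\sigma_\nu}$ with intermediate subgroup $W_{\sigma_\nu}^0$ gives $W_{\sigma_\nu}^0=W_{\sigma_\nu}^{0'}\rtimes(R'_{\sigma_\nu}\cap W_{\sigma_\nu}^0)$, so that $R_{\sigma_\nu}^0:=R'_{\sigma_\nu}\cap W_{\sigma_\nu}^0\simeq W_{\sigma_\nu}^0/W_{\sigma_\nu}^{0'}$ is a subgroup of $R'_{\sigma_\nu}$ isomorphic to the asserted quotient, yielding the embedding $R_{\sigma_\nu}^0\hookrightarrow R'_{\sigma_\nu}$. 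On the other hand, let $q:W_{\sigma_\nu}\twoheadrightarrow W_{\sigma_\nu}/W_{\sigma_\nu}^0\simeq R_{\sigma_\nu}$ be the projection attached to the first decomposition; restricting $q$ to $R'_{\sigma_\nu}$ gives a homomorphism $R'_{\sigma_\nu}\to R_{\sigma_\nu}$ with kernel exactly $R'_{\sigma_\nu}\cap W_{\sigma_\nu}^0=R_{\sigma_\nu}^0$, which is surjective and split because $R_{\sigma_\nu}$ sits inside $R'_{\sigma_\nu}$ and $q$ restricts to the identity on $R_{\sigma_\nu}$. Hence $R'_{\sigma_\nu}=R_{\sigma_\nu}^0\rtimes R_{\sigma_\nu}$, as claimed.

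The only non-formal input is the inclusion $R_{\sigma_\nu}\subseteq R'_{\sigma_\nu}$, and behind it the structural fact that $\{\alpha:\mu_\alpha(\sigma)=0\}$ is a subroot system of $\Phi_{\sigma_\nu}^0$ compatible with the ambient positive system; this is where I expect to have to be careful and to lean on the Plancherel-measure theory, the remainder being an abstract lemma about nested semi-direct decompositions. One should also note that $W_{\sigma_\nu}^{0'}$ is normal in all of $W_{\sigma_\nu}$, not merely in $W_{\sigma_\nu}^0$ — a consequence of the $W_{\sigma_\nu}$-invariance of the $\mu_\alpha$ — but this is already built into the decomposition $W_{\sigma_\nu}=W_{\sigma_\nu}^{0'}\rtimes R'_{\sigma_\nu}$ that we take as given.
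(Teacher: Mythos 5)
Your argument is correct and follows the same route the paper intends: Lemma \ref{rgp} is asserted there as a formal consequence of the two decompositions $W_{\sigma_\nu}=W_{\sigma_\nu}^0\rtimes R_{\sigma_\nu}$ and $W_{\sigma_\nu}=W_{\sigma_\nu}^{0'}\rtimes R'_{\sigma_\nu}$, and you supply exactly the missing details (the inclusions $W_{\sigma_\nu}^{0'}\subseteq W_{\sigma_\nu}^0$ and $R_{\sigma_\nu}\subseteq R'_{\sigma_\nu}$ via the positivity description of the Knapp--Stein $R$-group, plus the modular-law bookkeeping). Nothing to correct.
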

\begin{rem}
	It is easy to see that $R'_{\sigma_\nu}$ is exactly the Knapp--Stein $R$-group when the parabolic induction datum $\sigma\otimes \nu$ is unitary.
\end{rem}

\begin{lem} \label{key2}
	For $w\in W_M^0$ and $w_1\in W_M^1$, we have
	\[I^G_P(\nu,\sigma)^w\simeq I^G_P(\nu,\sigma)^{ww_1}. \]
\end{lem}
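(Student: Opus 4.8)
The plan is to recast the twisting action as a comparison of two parabolic inductions with the \emph{same} Levi $M$, and then to use Lemma \ref{key1} to confine their difference to the relative roots outside $\Phi_M^0$. Reading $I^G_P(\nu,\sigma)^w$ as $Ind^G_P\bigl((\sigma\otimes\nu)^w\bigr)$, set $\tau:=(\sigma\otimes\nu)^w$; since $w\in W_M$, the representation $\tau=\sigma^w\otimes\nu^w$ is again a unitary supercuspidal representation of $M$ twisted by a real unramified character, and $I^G_P(\nu,\sigma)^w=Ind^G_P(\tau)$ while $I^G_P(\nu,\sigma)^{ww_1}=Ind^G_P(\tau^{w_1})$. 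Fixing a lift $\dot w_1\in N_G(M)$ of $w_1$ and using the standard translation isomorphism $Ind^G_{\dot w_1P\dot w_1^{-1}}(\tau)\simeq Ind^G_P(\tau^{w_1})$ — proved by $f\mapsto\bigl(g\mapsto f(\dot w_1^{-1}g)\bigr)$ together with $\delta_{\dot w_1P\dot w_1^{-1}}=\delta_P\circ\mathrm{Ad}(\dot w_1^{-1})$ — the assertion becomes
\[Ind^G_P(\tau)\;\simeq\;Ind^G_{P_1}(\tau),\qquad P_1:=\dot w_1P\dot w_1^{-1}.\]

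Next I would pin down how $P_1$ differs from $P$. Both are parabolic subgroups with Levi $M$. Because $W_M$ preserves $\Phi_M$, and preserves $\Phi_M^0$ as $\omega_{w.\alpha}=w\omega_\alpha w^{-1}$, whereas $w_1\in W_M^1$ preserves the positive system $(\Phi_M^0)^+$ by definition, a short check shows that every relative root which is positive for $P$ but negative for $P_1$ must lie in $\Phi_M\setminus\Phi_M^0$; thus $P$ and $P_1$ differ only along roots outside $\Phi_M^0$. Now if $\alpha\in\Phi_M\setminus\Phi_M^0$ then $w_\alpha=w_0^{M_\alpha}w_0^M$ does not normalise $M$, so the co-rank one relative Weyl group $W_M^{M_\alpha}=N_{M_\alpha}(M)/M$ — which acts faithfully on the one-dimensional space $\mathfrak{a}_M/\mathfrak{a}_{M_\alpha}$ and hence has order at most two — is trivial. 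Consequently no non-trivial element of $W_M^{M_\alpha}$ fixes any unramified twist of $\tau$, so by Harish-Chandra's theory the co-rank one inductions $Ind^{M_\alpha}_{M_\alpha\cap P}(\tau)$ and $Ind^{M_\alpha}_{M_\alpha\cap\overline P}(\tau)$ are irreducible, and the co-rank one Plancherel measure $\mu_\alpha$ is finite and non-zero at $\tau$ (see \cite[\S V.2]{waldspurger2003formule} or \cite{silberger2015introduction}).

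Finally I would run the rank-one reduction of intertwining operators. Joining $P$ to $P_1$ by a minimal gallery of parabolics with Levi $M$, the standard intertwining operator $J_{P_1\mid P}(\tau)\colon Ind^G_P(\tau)\to Ind^G_{P_1}(\tau)$ factors as a composite of one-step operators, each of which is induced up from a co-rank one operator $J^{M_\alpha}_{M_\alpha\cap Q'\mid M_\alpha\cap Q}(\tau)$ with $\alpha$ a flipped root, hence $\alpha\in\Phi_M\setminus\Phi_M^0$ by the previous step. For such $\alpha$ this co-rank one operator is a non-zero intertwining map between the irreducible representations identified above — its composite with the reverse operator is $\mu_\alpha(\tau)^{-1}$ times the identity, a non-zero scalar — hence an isomorphism. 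Therefore $J_{P_1\mid P}(\tau)$ is an isomorphism, so $Ind^G_P(\tau)\simeq Ind^G_{P_1}(\tau)$, which is what was wanted.

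I expect the only genuine subtlety to sit in this last step: at the possibly non-tempered parameter $\tau$ one must be certain that the intervening co-rank one operators are honest isomorphisms and that their product is well defined, with no pole cancelling a zero. The observation of the second step — $W_M^{M_\alpha}=\{1\}$ for every flipped root, so that $\mu_\alpha$ has neither zero nor pole along the relevant real line — is precisely what settles this; an alternative is to quote a packaged ``independence of the parabolic'' statement, valid as soon as all intervening co-rank one inductions are irreducible. The translation isomorphism of the first step and the root combinatorics of the second, where Lemma \ref{key1} enters, are routine.
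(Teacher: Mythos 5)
Your proof is correct and follows essentially the same route as the paper: reduce to the single twist by $w_1$, realize it as the standard intertwining operator $J$ composed with left translation, use $w_1.(\Phi_M^0)^+>0$ to see that only roots outside $\Phi_M^0$ are flipped, and invoke co-rank one irreducibility together with the non-vanishing of $\mu_\alpha$ (Silberger/Harish-Chandra) to conclude the operator is an isomorphism. The only cosmetic difference is that you derive the non-vanishing of $\mu_\alpha$ for $\alpha\in\Phi_M\setminus\Phi_M^0$ from the triviality of $N_{M_\alpha}(M)/M$, where the paper simply cites Silberger's Corollary 1.8, and you factor the operator along a minimal gallery rather than using the two-sided product formula $J_{P|P^{w_1}}J_{P^{w_1}|P}=\prod j_\alpha\, J_{P|P}$.
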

\begin{proof}
	it reduces to show 
	\[I^G_P(\nu,\sigma)\simeq I^G_P(\nu,\sigma)^{w_1}, \]
	which follows from the associativity property of intertwining operators (cf. \cite[IV.3.(4)]{waldspurger2003formule}). To be precise, up to non-zero scalar, the non-trivial intertwining operator
	\[A: ~Ind_P^G(\sigma\otimes \nu)\longrightarrow Ind_P^G(\sigma\otimes \nu)^{w_1} \]
	is equal to 
	\[J_{P|P^{w_1}}(\sigma^{w_1}\otimes\nu^{w_1})\circ\lambda(w_1):~Ind_P^G(\sigma\otimes \nu)\longrightarrow Ind^G_{P^{w_1}}(\sigma\otimes\nu)^{w_1}\longrightarrow Ind_P^G(\sigma\otimes\nu )^{w_1}.  \]
	By \cite[IV.3.(4)]{waldspurger2003formule}, we have
	\[J_{P|P^{w_1}}(\sigma^{w_1}\otimes\nu^{w_1})J_{P^{w_1}|P}(\sigma\otimes\nu)=\prod j_\alpha(\sigma\otimes\nu) J_{P|P}(\sigma\otimes\nu), \]
	where $\alpha$ runs over $\Phi_M(P)\cap \Phi_M(\overline{P^{w_1}})$ with $\overline{P^{w_1}}$ the opposite parabolic subgroup of $P^{w_1}$. Notice that 
	\[w_1.(\Phi_M^0)^+>0, \]
	so we have
	\[\Phi_M(P)\cap \Phi_M(\overline{P^{w_1}})\cap \Phi_M^0=\emptyset .\]
	In view of \cite[Corollary 1.8]{silberger1980special}, for $\alpha\in \Phi_M(P)-\Phi_M^0$, $j_\alpha(\sigma\otimes \nu)\neq 0,\infty$, and the associated co-rank one induced representation is always irreducible. Thus
	\[\prod j_\alpha(\sigma\otimes\nu) J_{P|P}(\sigma\otimes\nu)=\prod j_\alpha(\sigma\otimes\nu)\neq 0,~\infty. \]
	Whence $A$ is an isomorphism. 
	
\end{proof}
Let us end this section by recalling the history on some special cases of the Main Theorem, especially the regular case and the unitary case. Some irreducibility conditions under various different assumptions for principal series were given in \cite{casselman1980unramified,rodier1981decomposition,winarsky1978reducibility,keys1982decomposition,keys1982reducibility,muller1979integrales,kato1982irreducible,tadic1994representations} 
\begin{thm}(cf. \cite[Theorem 5.4.3.7]{silberger2015introduction})\label{sils}
	If the inducing datum $\sigma\otimes \nu$ is {\bf regular}, i.e. $W_{\sigma_\nu}=\{1\}$. Then 
	\[I^G_P(\nu,\sigma)\mbox{ is irreducible iff }Ind_{P\cap M_\alpha}^{M_\alpha}(\sigma\otimes \nu)\mbox{ is irreducible for all }\alpha\in \Phi_M^0. \]
\end{thm}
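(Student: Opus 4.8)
The plan is to treat the two directions of the equivalence in turn.

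\emph{Necessity} (irreducibility of $I^G_P(\nu,\sigma)$ forces each co-rank one induction to be irreducible) uses no regularity. For a relative simple root $\alpha\in\Delta_M^0$, $M_\alpha$ is the Levi factor of a standard parabolic $Q_\alpha\supseteq P$, so transitivity of parabolic induction gives $I^G_P(\nu,\sigma)\simeq Ind_{Q_\alpha}^G\bigl(Ind_{P\cap M_\alpha}^{M_\alpha}(\sigma\otimes\nu)\bigr)$. The functor $Ind_{Q_\alpha}^G$ is exact and annihilates no nonzero representation (the adjunction unit $V\to J_{M_\alpha}\bigl(Ind_{Q_\alpha}^G(V)\bigr)$ is injective, by the geometric lemma of Bernstein--Zelevinsky and Casselman), so a proper nonzero subrepresentation of the inner induction would produce one of $I^G_P(\nu,\sigma)$; hence irreducibility of the latter forces irreducibility of every such co-rank one induction for simple $\alpha$. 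A general $\alpha\in\Phi_M^0$ is brought to a simple one by conjugating with an element of $W_M^0$, using that the reducibility of a parabolic induction depends only on the Levi and the inducing representation, not on the chosen parabolic with that Levi.

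\emph{Sufficiency}, first step: bound the commuting algebra. By Bernstein's second adjointness,
\[
End_G\bigl(I^G_P(\nu,\sigma)\bigr)\simeq Hom_M\bigl(\sigma\otimes\nu,\ \pi_{\bar N}\bigr),
\]
where $\pi_{\bar N}$ is the Jacquet module of $\pi:=I^G_P(\nu,\sigma)$ along the opposite parabolic $\bar P=M\bar N$. Since $\sigma$ is supercuspidal, all its Jacquet modules along proper parabolics of $M$ vanish, so the geometric lemma collapses $\pi_{\bar N}$ to a representation glued out of exactly the $w.(\sigma\otimes\nu)$, $w\in W_M$, each with multiplicity one. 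Regularity ($W_{\sigma_\nu}=\{1\}$) forces $Hom_M\bigl(\sigma\otimes\nu,\ w.(\sigma\otimes\nu)\bigr)$ to vanish for $w\neq 1$ and to be one-dimensional for $w=1$, so $\dim End_G(\pi)\le 1$; as $\pi\neq 0$ we get $\dim End_G(\pi)=1$, i.e. $\pi$ is indecomposable.

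\emph{Sufficiency}, second step: promote indecomposability to irreducibility using the co-rank one hypothesis. By the Harish-Chandra product formula for $\mu$-functions (equivalently \cite[IV.3.(4)]{waldspurger2003formule}, as already exploited in the proof of Lemma \ref{key2}), the composite $J_{P|\bar P}(\sigma\otimes\nu)\circ J_{\bar P|P}(\sigma\otimes\nu)$ equals $\prod_{\alpha\in\Phi_M(P)}j_\alpha(\sigma\otimes\nu)$ times the identity. For $\alpha\in\Phi_M(P)\setminus\Phi_M^0$ one has $j_\alpha(\sigma\otimes\nu)\neq 0,\infty$ by \cite[Corollary 1.8]{silberger1980special}; for $\alpha\in\Phi_M(P)\cap\Phi_M^0$ a zero or a pole of $j_\alpha(\sigma\otimes\nu)$ would render $Ind_{P\cap M_\alpha}^{M_\alpha}(\sigma\otimes\nu)$ reducible, contrary to assumption. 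Hence $j_\alpha(\sigma\otimes\nu)\in\mathbb{C}^\times$ for every reduced relative root $\alpha\in\Phi_M$, the long intertwining operator $J_{\bar P|P}(\sigma\otimes\nu)$ is an isomorphism, and likewise every standard intertwining operator among the various $Ind_Q^G\bigl(w.(\sigma\otimes\nu)\bigr)$ is an isomorphism up to a nonzero scalar. At such a parameter the Harish-Chandra--Silberger description of the commuting algebra shows that $\pi$ is completely reducible; combined with indecomposability, $\pi$ is irreducible. The crux --- and the step I expect to be the main obstacle --- is precisely this final semisimplicity statement for a possibly non-unitary real twist $\nu$: in the unitary (tempered) case it is Harish-Chandra's theorem on the commuting algebra of induced representations, while the general case is the substance of \cite[Theorem 5.4.3.7]{silberger2015introduction}, obtained by meromorphically continuing the intertwining operators and bookkeeping their zeros and poles against the co-rank one reducibility data. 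Granting this input, the adjointness identity, the geometric lemma and the product formula are all routine.
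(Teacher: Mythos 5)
First, a point of comparison: the paper does not prove Theorem \ref{sils} at all --- it is quoted from Silberger's book as known input, so there is no internal proof to measure you against. Judged on its own terms, your write-up gets the routine parts right (the necessity direction via induction in stages and exactness is fine, and the bound $\dim End_G(\pi)\le |W_{\sigma_\nu}|=1$ via second adjointness and the geometric lemma is correct, including the observation that $Hom$ out of an irreducible into a filtered object is bounded by the sum over the graded pieces). But the decisive step is missing.

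The gap is exactly where you flag it, and flagging it does not close it: you deduce irreducibility from ``indecomposable $+$ completely reducible,'' and the complete reducibility of $I^G_P(\nu,\sigma)$ at a non-unitary parameter is precisely the content of the theorem you are trying to prove. Invertibility of all the $j_\alpha$, hence of $J_{\bar P|P}(\sigma\otimes\nu)$, does not by itself yield semisimplicity --- Harish-Chandra's commuting algebra theorem is a tempered statement, and reducible standard modules with Langlands quotients show that non-unitary inductions are typically indecomposable rather than semisimple. So as written the argument is circular at the crux. The standard way to finish (and the one consistent with the machinery the paper actually deploys in Section 3) avoids semisimplicity entirely: by regularity each $w.(\sigma\otimes\nu)$, $w\in W_M$, occurs with multiplicity one in $J_M(I^G_P(\nu,\sigma))$ and hence lies in the Jacquet module of a unique irreducible constituent; irreducibility of $Ind^{M_\alpha}_{P\cap M_\alpha}(w.(\sigma\otimes\nu))$ forces $w.(\sigma\otimes\nu)$ and $w_\alpha w.(\sigma\otimes\nu)$ into the same constituent, which links the whole $W_M^0$-orbit, and the elements of $W_M^1$ are absorbed by the intertwining-operator isomorphism of Lemma \ref{key2} together with the decomposition of Lemma \ref{key1}. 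Tracking constituents through exponents in this way replaces the semisimplicity input and turns your outline into a proof; without it, what you have is an accurate description of what must be true rather than a demonstration that it is.
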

\begin{thm}(cf. \cite{silberger1978knapp})\label{silu}
	If the inducing datum $\sigma\otimes \nu$ is {\bf unitary}, i.e. $\nu=0$. Then
	\[I^G_P(\nu,\sigma)\mbox{ is irreducible iff }R'_{\sigma_\nu
	}=\{1\}~\mbox{iff}~R_{\sigma_\nu}=\{1\}~ + \mbox{ co-rank one irreducibility}. \]	
\end{thm}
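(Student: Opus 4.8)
Since $\nu=0$ forces $\sigma$ to be unitary supercuspidal, the statement is in substance the Knapp--Stein $R$-group theorem (\cite{knapp1975singular,silberger1978knapp}), repackaged through the structural Lemma \ref{rgp} and the Remark following it; so the plan is to recall that theorem and splice it with Lemma \ref{rgp}, not to prove anything genuinely new. Two equivalences must be produced: first, that $I^G_P(\nu,\sigma)$ is irreducible if and only if $R'_{\sigma_\nu}=\{1\}$; and second, that $R'_{\sigma_\nu}=\{1\}$ if and only if $R_{\sigma_\nu}=\{1\}$ and every co-rank one induction $Ind_{P\cap M_\alpha}^{M_\alpha}(\sigma\otimes\nu)$, $\alpha\in\Phi_M^0$, is irreducible.

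For the first equivalence I would invoke the classical theory directly. Since $\nu=0$, $I^G_P(\nu,\sigma)=Ind^G_P(\sigma)$ is unitary, hence completely reducible, and Harish-Chandra's description of its commuting algebra together with the Knapp--Stein construction of the $R$-group $R^{\mathrm{KS}}$ identifies $\mathrm{End}_G(Ind^G_P(\sigma))$ with the group algebra of $R^{\mathrm{KS}}$, possibly twisted by a $2$-cocycle; as a cocycle twist cannot change whether this algebra is one-dimensional, $Ind^G_P(\sigma)$ is irreducible precisely when $R^{\mathrm{KS}}=\{1\}$. By the Remark following Lemma \ref{rgp}, $R'_{\sigma_\nu}=R^{\mathrm{KS}}$ since the inducing datum $\sigma\otimes\nu$ is unitary, which is exactly the first equivalence; for this step I would simply cite \cite{silberger1978knapp} (or \cite[Ch.~5]{silberger2015introduction}).

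For the second equivalence, Lemma \ref{rgp} gives $R'_{\sigma_\nu}=R_{\sigma_\nu}^0\rtimes R_{\sigma_\nu}$, so $R'_{\sigma_\nu}=\{1\}$ is equivalent to $R_{\sigma_\nu}^0=\{1\}$ together with $R_{\sigma_\nu}=\{1\}$, and it remains to match $R_{\sigma_\nu}^0=\{1\}$ with co-rank one irreducibility. Since $R_{\sigma_\nu}^0\simeq W_{\sigma_\nu}^0/W_{\sigma_\nu}^{0'}$, this amounts to $W_{\sigma_\nu}^0=W_{\sigma_\nu}^{0'}$. Now $W_{\sigma_\nu}^0$ is generated by all the relative reflections $w_\alpha$ with $\alpha\in\Phi_{\sigma_\nu}^0$, whereas $W_{\sigma_\nu}^{0'}$, the Harish-Chandra/Knapp--Stein ``$W'$'' subgroup appearing in $W_{\sigma_\nu}=W_{\sigma_\nu}^{0'}\rtimes R'_{\sigma_\nu}$, is generated by exactly those $w_\alpha$ (with $\alpha\in\Phi_{\sigma_\nu}^0$) for which the co-rank one induction $Ind_{P\cap M_\alpha}^{M_\alpha}(\sigma\otimes\nu)$ is irreducible; moreover, for $\alpha\in\Phi_M^0\setminus\Phi_{\sigma_\nu}^0$ the reflection $w_\alpha$ does not preserve $\sigma\otimes\nu$, so the corresponding co-rank one induction is automatically irreducible. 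Putting these together, $W_{\sigma_\nu}^0=W_{\sigma_\nu}^{0'}$ holds if and only if $Ind_{P\cap M_\alpha}^{M_\alpha}(\sigma\otimes\nu)$ is irreducible for every $\alpha\in\Phi_M^0$, which completes the second equivalence.

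The only substantive ingredient here is the Knapp--Stein theorem entering the first equivalence, which I would treat as known. The delicate combinatorial point is the forward direction of the last step: from $w_\alpha\in W_{\sigma_\nu}^{0'}$ one has to conclude that $Ind_{P\cap M_\alpha}^{M_\alpha}(\sigma\otimes\nu)$ is irreducible, even though a priori a reflection subgroup may contain reflections lying outside any prescribed generating set. I expect this to be the place demanding the most care; I would settle it by using that $W_{\sigma_\nu}^{0'}$ is precisely the subgroup of $W_{\sigma_\nu}$ acting by scalars on $I^G_P(\nu,\sigma)$, together with the rank-one fact that a relative reflection $w_\alpha$ acts by a scalar exactly when its co-rank one induction is irreducible (when that induction is reducible, $w_\alpha$ acts through the non-scalar involution determined by its two summands), so that no reducibility reflection can slip into $W_{\sigma_\nu}^{0'}$. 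Equivalently, one verifies that $\{\alpha\in\Phi_{\sigma_\nu}^0 : Ind_{P\cap M_\alpha}^{M_\alpha}(\sigma\otimes\nu)\text{ is irreducible}\}$ is stable under $W_{\sigma_\nu}^{0'}$, and is therefore exactly the root set of that reflection subgroup.
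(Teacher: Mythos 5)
Your proposal is correct and matches the paper's treatment: the paper offers no proof of Theorem \ref{silu} beyond the citation to \cite{silberger1978knapp}, and your argument is exactly the intended unpacking --- the Knapp--Stein/Silberger commuting-algebra theorem gives the first equivalence, while Lemma \ref{rgp} together with the standard facts that $\mu_\alpha(\sigma)=0$ (for $\alpha\in\Phi^0_{\sigma_\nu}$) is equivalent to co-rank one irreducibility, and that reducibility forces $w_\alpha$ to fix $\sigma$, gives the second. Your care about reflections in $W^{0'}_{\sigma_\nu}$ not escaping the generating set is well placed and correctly resolved by the $W_{\sigma_\nu}$-stability of $\{\alpha:\mu_\alpha(\sigma)=0\}$.
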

At last, for the unitary regular case, it is always irreducible which is a theorem of Bruhat in \cite[Theorem 6.6.1]{casselman1995introduction}.

\section{proof of the irreducibility criterion}
In this section, we carry out the proof of the Main theorem, i.e. the irreducibility criterion for generalized principal series, following Casselman--Tadi\'{c}'s Jacquet module argument. Let us first recall the Main theorem as follows:
\begin{mthm*}[Muller type irreducibility criterion]
	$I^G_P(\nu,\sigma)$ is irreducible if and only if the following are satisfied
	\begin{enumerate}[(i)]
		\item $R_{\sigma_\nu}=\{1\}$
		\item $Ind_{P\cap M_\alpha}^{M_\alpha}(\sigma\otimes \nu)$ is irreducible for any $\alpha\in \Phi_M^0$.
	\end{enumerate}
\end{mthm*}
\begin{proof}
	For the necessary part, we first show that the co-rank one inductions are irreducible. To be precise, for each $\alpha\in \Phi_M^0$, under the conjugation of a relative Weyl element $w\in W_M$, we may assume that $\alpha$ is a relative simple root. Therefore 
	\[Ind_{P\cap M_\alpha^w}^{M^w_\alpha}(\sigma\otimes \nu)^w\mbox{ is irreducible,} \]
	which follows from the fact that $I_P^G(\nu,\sigma)$ and $I_{P}^G(\nu,\sigma)^w$ share the same constituent. Which in turn implies that $Ind_{P\cap M_\alpha}^{M_\alpha}(\sigma\otimes\nu)$ is irreducible.
	
	Now it remains to show that our modified R-group $R_{\sigma_\nu}$ is trivial. Note that $W_{\sigma_\nu}$ is a subgroup of the Weyl group $W_{\Delta_1}$ of the Levi subgroup $M_{\Delta_1}$ determined by $\Delta_1$ (see Lemma \ref{urd}), a same argument as above shows that 
		\[R_{\sigma_\nu}=\{1\}, \]
	given the fact that we can move out the $\nu$ from the inducing data on the $M_{\Delta_1}$-level as follows:
	\[Ind_{P\cap M_{\Delta_1}}^{M_{\Delta_1}}(\sigma\otimes \nu)\simeq Ind_{P\cap M_{\Delta_1}}^{M_{\Delta_1}}(\sigma)\otimes \nu .\]
	
	As for the sufficient part, one can follow Muller's intertwining operator argument for principal series based on the following observation which is a corollary of Casselman's subrepresentation theorem, which has not been pointed out clearly in \cite{muller1979integrales},
	\[I^G_P(\nu,\sigma)\mbox{ is irreducible iff }I^G_P(\nu,\sigma)\simeq I^G_P(\nu,\sigma)^w\mbox{ and }Hom_G(I^G_P(\nu,\sigma), I^G_P(\nu,\sigma)^w)\simeq \mathbb{C}	\mbox{ for all }w\in W_M. \]
	But we will give an intuitive argument using the Jacquet module machine and the theory of intertwining operator as follows. Without loss of generality, for $\pi\in JH(I^G_P(\nu,\sigma))$, assume $\sigma\otimes \nu\in J_M(\pi)$, then
	\[I^G_P(\nu,\sigma)\mbox{ is irreducible iff }(\sigma\otimes\nu)^w\in J_M(\pi)\mbox{ for all }w\in W_M. \]
	To show that $(\sigma\otimes\nu)^w\in J_M(\pi)\mbox{ for all }w\in W_M$, we divide it into the following two steps:
		\begin{enumerate}[(i)]
			\item We first show that the multiplicity appears in $J_M(\pi)$, i.e.
			\[|W_{\sigma_\nu}|(\sigma\otimes \nu)\subset J_M(\pi).\]
			By Lemma \ref{urd}, we have the essentially unitary induction on $M_{\Delta_1}$-level, i.e.
			\[Ind_{P\cap M_{\Delta_1}}^{M_{\Delta_1}}(\sigma\otimes \nu)\simeq Ind_{P\cap M_{\Delta_1}}^{M_{\Delta_1}}(\sigma)\otimes \nu .\]
			As $R_{\sigma_\nu}=1$ and all co-rank one inductions are irreducible, thus Theorem \ref{silu} says that the above induction is irreducible, whence our claim holds. 
			\item We then show that each orbit appears in $J_M(\pi)$, i.e.
			\[(\sigma\otimes \nu)^w\in J_M(\pi)\mbox{ for all }w\in W_M/W_{\sigma_\nu}, \]
			which follows from the case of regular inducing data after conjugating by a Weyl element in $W_M$ which is hidden in \cite{luo2018R}). To be precise, as $$W_M=W_M^0\rtimes W_M^1,$$ a similar argument as in the proof of the necessary part shows that
			\[(\sigma\otimes \nu)^w\in J_M(\pi) \]
			for all $w\in W_M^0/W_{\sigma_\nu}^0$.
			
			Therefore it remains to show that, for any $w\in W_M^1$ which is not in $W_{\sigma_\nu}$,
			\[(\sigma\otimes \nu)^w\in J_M(\pi). \]
			This follows from our key Lemma \ref{key2}.
		\end{enumerate}
\end{proof}
\begin{rem}
In view of the above argument, it is easy to see that the irreducibility criterion holds for covering groups given the fact that the Knapp--Stein $R$-group theory has been established in \cite{luo2017R} and the following irreducibility criterion for regular generalized principal series of covering groups.
\end{rem}
Recall that $\tilde{G}$ is a finite central covering group of $G$, and $\tilde{P}=\tilde{M}N$ is a parabolic subgroup of $\tilde{G}$. Let $\tilde{\sigma}$ be a genuine regular supercuspidal representation of $\tilde{M}$, and denote by $Ind^G_P(\tilde{\sigma})$ the normalized parabolic induction of $\tilde{\sigma}$ from $\tilde{P}$ to $\tilde{G}$. All other notions are the same as in the non-cover case. Then we have
\begin{lem}The following are equivalent
	\begin{enumerate}[(i)]
		\item $Ind^{\tilde{G}}_{\tilde{P}}(\tilde{\sigma})$ is irreducible.
		\item $Ind_{\tilde{P}\cap \tilde{M}_\alpha}^{\tilde{M}_\alpha}(\tilde{\sigma})$ is irreducible for all $\alpha\in \Phi_M^0$.
	\end{enumerate}
\end{lem}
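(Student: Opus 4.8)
The plan is to carry Silberger's proof of the regular case (Theorem~\ref{sils}; here $W_{\tilde\sigma}=\{1\}$, hence $R_{\tilde\sigma}=\{1\}$ automatically) over to the finite central cover $\tilde G$ by the Casselman--Tadi\'{c} Jacquet-module method. The direction (i)$\Rightarrow$(ii) is formal and uses nothing special about covers: with $\tilde Q=\tilde M_\alpha\tilde N'$ the standard parabolic of $\tilde G$ with Levi $\tilde M_\alpha$, induction by stages gives $Ind^{\tilde G}_{\tilde P}(\tilde\sigma)\simeq Ind^{\tilde G}_{\tilde Q}\big(Ind^{\tilde M_\alpha}_{\tilde P\cap\tilde M_\alpha}(\tilde\sigma)\big)$, and since normalized parabolic induction is exact and sends nonzero smooth representations to nonzero ones, reducibility of some co-rank one induction forces reducibility of $Ind^{\tilde G}_{\tilde P}(\tilde\sigma)$. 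So the content is the converse.

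Assume every $Ind^{\tilde M_\alpha}_{\tilde P\cap\tilde M_\alpha}(\tilde\sigma)$, $\alpha\in\Phi_M^0$, is irreducible, and let $\pi$ be an irreducible subquotient of $Ind^{\tilde G}_{\tilde P}(\tilde\sigma)$. By the geometric lemma, $J_{\tilde M}\big(Ind^{\tilde G}_{\tilde P}(\tilde\sigma)\big)$ has the $\tilde\sigma^w$ ($w\in W_M$) as composition factors, each with multiplicity one; regularity makes them pairwise non-isomorphic, hence --- being supercuspidal --- mutually non-extending, so the geometric-lemma filtration splits and $J_{\tilde M}(\pi)$ is semisimple with factors among the $\tilde\sigma^w$. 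By Frobenius reciprocity $\tilde\sigma^{w'}\in J_{\tilde M}(\pi)$ for some $w'$; since $W_M$ preserves $\Phi_M^0$, replacing $\tilde\sigma$ by $\tilde\sigma^{w'}$ (which merely permutes the hypotheses in (ii)) we may assume $\tilde\sigma\in J_{\tilde M}(\pi)$, whence $\pi\hookrightarrow Ind^{\tilde G}_{\tilde P}(\tilde\sigma)$ (Frobenius reciprocity, $J_{\tilde M}(\pi)$ being semisimple). Put $S=\{w\in W_M:\tilde\sigma^w\in J_{\tilde M}(\pi)\}$, so $1\in S$. Step 1 (propagation along $W_M^0$): for $v\in S$ and $\alpha\in\Phi_M^0$ we have $\pi\hookrightarrow Ind^{\tilde G}_{\tilde P}(\tilde\sigma^v)\simeq Ind^{\tilde G}_{\tilde Q}(\tau_v)$, where $\tau_v:=Ind^{\tilde M_\alpha}_{\tilde P\cap\tilde M_\alpha}(\tilde\sigma^v)$ is --- $v$ preserving $\Phi_M^0$ --- a $\tilde G$-conjugate of one of the hypothesized irreducible co-rank one inductions, hence irreducible; thus Frobenius reciprocity for $\tilde Q$ exhibits $\tau_v$ as a quotient of $J_{\tilde M_\alpha}(\pi)$, and applying the exact functor $J^{\tilde M_\alpha}_{\tilde M}$ yields a surjection $J_{\tilde M}(\pi)\twoheadrightarrow J^{\tilde M_\alpha}_{\tilde M}(\tau_v)=\tilde\sigma^v+\tilde\sigma^{vw_\alpha}$ in the Grothendieck group (the geometric lemma for the co-rank one induction $\tau_v$), so $vw_\alpha\in S$. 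Hence $S$ is a union of right $W_M^0$-cosets and contains $W_M^0$.

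Step 2 (bridging the cosets): fix $w_1\in W_M^1$. By the covering analogue of Lemma~\ref{key2}, $Ind^{\tilde G}_{\tilde P}(\tilde\sigma)\simeq Ind^{\tilde G}_{\tilde P}(\tilde\sigma^{w_1})$, and conjugating by a representative of $w_1$ identifies the right-hand side with $Ind^{\tilde G}_{\tilde P'}(\tilde\sigma)$ for a parabolic $\tilde P'$ of $\tilde G$ with Levi $\tilde M$, conjugate to $\tilde P$ by that representative. Hence $\pi\hookrightarrow Ind^{\tilde G}_{\tilde P'}(\tilde\sigma)$, and Frobenius reciprocity for $\tilde P'$ places $\tilde\sigma$ in $J^{\tilde P'}_{\tilde M}(\pi)\simeq\big(J_{\tilde M}(\pi)\big)^{w_1}$ --- Jacquet modules for conjugate parabolics being conjugate, and $\pi$ isomorphic to its $\tilde G$-conjugate --- so $\tilde\sigma^{w_1^{-1}}\in J_{\tilde M}(\pi)$. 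As $w_1$ runs over the group $W_M^1$ this gives $W_M^1\subseteq S$, and together with Step 1 and Lemma~\ref{key1} (so that $W_M=\bigsqcup_{w_1\in W_M^1}W_M^0w_1$) we conclude $S=W_M$. Since each $\tilde\sigma^w$ occurs with multiplicity one in $J_{\tilde M}\big(Ind^{\tilde G}_{\tilde P}(\tilde\sigma)\big)$ and all of them already occur in $J_{\tilde M}(\pi)$, no further irreducible subquotient is possible, so $Ind^{\tilde G}_{\tilde P}(\tilde\sigma)=\pi$ is irreducible.

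Apart from the covering version of Lemma~\ref{key2}, every ingredient used --- exactness of induction and Jacquet functors, Frobenius reciprocity, transitivity of Jacquet functors, the geometric lemma, the formula relating Jacquet modules with respect to conjugate parabolics, and the semisimplicity/multiplicity-one statements special to the regular case --- holds for a finite central cover $\tilde G$ word for word, since such a cover splits canonically over every unipotent radical, so the Levi and parabolic combinatorics and the Hecke-algebra bookkeeping are exactly those of $G$. The one substantive point is Lemma~\ref{key2}, whose proof rests on Harish-Chandra's theory of intertwining operators and Plancherel measures, the associativity relation \cite[IV.3.(4)]{waldspurger2003formule}, and Silberger's co-rank one results \cite[Corollary~1.8]{silberger1980special} (nonvanishing and finiteness of the factors $j_\alpha$, and irreducibility of the associated co-rank one inductions, for $\alpha\in\Phi_M(P)-\Phi_M^0$). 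These are available in the covering setting from the Knapp--Stein theory of \cite{luo2017R} together with a reduction to rank-one covering groups; I expect the principal --- though, given that literature, routine --- task to be verifying that this analytic package, in particular the $j_\alpha$-factorization of the long intertwining operator attached to $w_1\in W_M^1$ that makes it an isomorphism, goes through cleanly.
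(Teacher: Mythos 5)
Your proof is correct and follows essentially the same route as the paper: the paper's own proof of this lemma is a one-line appeal to the Plancherel-measure properties for covering groups from \cite{luo2017R} together with ``the argument in Lemma \ref{key2}'', and what you have written is precisely the detailed Jacquet-module skeleton of the Main Theorem's proof specialized to the regular case (propagation along $W_M^0$ via co-rank one irreducibility, then bridging the $W_M^0$-cosets via the decomposition $W_M=W_M^0\rtimes W_M^1$ and the covering analogue of Lemma \ref{key2}). The one input you flag as needing verification --- the $j_\alpha$-factorization and nonvanishing for the long intertwining operator attached to $w_1\in W_M^1$ on the cover --- is exactly what the paper also defers to \cite{luo2017R}, so your acknowledged gap coincides with the paper's.
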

\begin{proof}
	This follows from the properties of Plancherel measure generalizing to covering groups in \cite{luo2017R} and the argument in the above Lemma \ref{key2}.
\end{proof}	

\section{a conjectural criterion of parabolic inductions}
In the section, we would like to first serve you two simple observations, originating from \cite{rodier1981decomposition,luo2018R,luo2018C}, on a conjectural irreducibility criterion of parabolic induction learned from M. Gurevich's talk in the National University of Singapore. Then we would like to propose an ambitious conjecture for general groups.

In the following, let us first recall the explicit conjectural irreducibility criterion given in M. Gurevich's talk. Let $P=MN$ be a parabolic subgroup of $GL_n$ with the Levi subgroup $M=\prod_{i\in I}GL_{n_i}$, and denote by $P_{i,j}$ the parabolic subgroup of $GL_{(n_i+n_j)}$ with the Levi subgroup $M_{i,j}=GL_{n_i}\times GL_{n_j}$ for $i\neq j\in I$. For an irreducible admissible representation $\otimes_i\sigma_i$ of $M$, the conjectural irreducibility criterion of parabolic induction for $GL_n$ is as follows:
\begin{conj}\label{maxim}
	The parabolic induction $\times_i\sigma_i:=Ind^{GL_n}_P(\otimes_i\sigma_i)$ is irreducible if and only if the co-rank one parabolic induction $\sigma_i\times \sigma_j:=Ind^{GL_{(n_i+n_j)}}_{M_{i,j}}(\sigma_i\otimes \sigma_j)$ is irreducible for all $i\neq j\in I$.
\end{conj}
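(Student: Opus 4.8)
The plan is to separate the trivial implication from the substantial one and to attack the latter by reducing to the combinatorics of Zelevinsky segments. It is worth noting at the outset that Conjecture~\ref{maxim} is exactly the $GL_n$ specialization of the Main Conjecture (Conjecture~\ref{gmaxim}): the hypothesis $R_\tau=\{1\}$ there is automatic, since the Knapp--Stein $R$-group of $GL_n$ is always trivial, and, correspondingly, the relative Weyl group of $M=\prod_{i\in I}GL_{n_i}$ satisfies $W_M=W_M^0\cong\prod_k\mathfrak{S}_{m_k}$ with $m_k=\#\{i:n_i=k\}$, so that $W_M^1=\{1\}$ and the co-rank one Levi subgroups $M_\alpha$, $\alpha\in\Phi_M$, are exactly the $GL_{n_i+n_j}\times\prod_{k\neq i,j}GL_{n_k}$.

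Necessity is immediate from exactness of parabolic induction together with the standard fact that an irreducible induced representation of $GL_n$ is independent of the order of its inducing data. If $\times_i\sigma_i$ is irreducible, then for any $i\neq j$ we may write it as $A\times(\sigma_i\times\sigma_j)\times B$, with $A$ and $B$ the inductions of the remaining factors. If $\sigma_i\times\sigma_j$ had a proper nonzero subrepresentation $\tau$, then $A\times\tau\times B$ would be a nonzero subrepresentation of $\times_i\sigma_i$ with nonzero cokernel $A\times\big((\sigma_i\times\sigma_j)/\tau\big)\times B$, contradicting irreducibility. Hence every $\sigma_i\times\sigma_j$ is irreducible; in particular, if two of the $\sigma_i$ coincide, that common representation is square-irreducible (``$\pi\times\pi$ irreducible''), a fact to be exploited below.

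For sufficiency, the plan is a two-step reduction. First, pass to Langlands--Zelevinsky data: write $\sigma_i=\langle\mathfrak{m}_i\rangle$ for a multisegment $\mathfrak{m}_i$, so that $\times_i\sigma_i$ and the product of the standard modules attached to the $\mathfrak{m}_i$ share the same cuspidal support. The pairwise hypothesis says each $\langle\mathfrak{m}_i\rangle\times\langle\mathfrak{m}_j\rangle$ is irreducible, hence equals $\langle\mathfrak{m}_i+\mathfrak{m}_j\rangle$ and is order-independent; the aim is to show that this pairwise compatibility forces the segments occurring across the various $\mathfrak{m}_i$ to be pairwise unlinked, so that $\times_i\sigma_i$ is a product of unlinked segment representations and Zelevinsky's irreducibility criterion applies. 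A parallel route, closer to the proof of our Main Theorem, is a direct Jacquet-module argument: fix an irreducible subquotient $\pi$ with $\otimes_i\sigma_i\in J_M(\pi)$ and show $(\otimes_i\sigma_i)^w\in J_M(\pi)$ for all $w\in W_M$; since $W_M$ is generated by transpositions of equal-size adjacent blocks, each generator is a co-rank one move governed by the irreducibility of the corresponding $\sigma_i\times\sigma_j$ via the associativity of intertwining operators, exactly as in the proof of Lemma~\ref{key2}, and one then propagates from the generators to all of $W_M$ and to the correct multiplicity --- where the square-irreducibility of repeated factors supplied by necessity handles the case of coinciding $\sigma_i$.

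The main obstacle is the sufficiency direction for general, non-segment inducing data. The geometric lemma expresses $J_M(\times_i\sigma_i)$ as a large sum, and excluding an ``accidental'' proper subquotient requires genuine control over how multisegments interact under $\times$ and under Jacquet functors (Bernstein--Zelevinsky derivatives). In effect one needs the full strength of the theory of square-irreducible (``real'', ``prime'') representations --- that among such, irreducibility of products is detected pairwise --- and the delicate point is to verify that the reduction to segments, or more generally to square-irreducible building blocks, preserves the pairwise-irreducibility hypothesis at every stage. The case of repeated factors, settled above, is the prototype; upgrading it to a uniform argument for arbitrary multisegments is where the real difficulty lies.
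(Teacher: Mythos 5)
The statement you are addressing is stated in the paper only as a conjecture (attributed to M.~Gurevich's talk, with a remark that it has since become a theorem of Gurevich); the paper gives no general proof of it, only two special cases: Lemma \ref{regular}, which settles it when the supercuspidal support is regular by quoting \cite[Theorem 3.7]{luo2018R}, and Lemma \ref{prodlem}, which settles it under a decomposition hypothesis via the product formula of \cite{luo2018C}. So there is no proof in the paper to compare yours against, and your proposal does not close the gap either. Your necessity argument is correct and is the standard exactness-plus-reordering argument the paper has in mind when it calls that direction ``obvious.'' But the sufficiency direction --- the entire content of the conjecture --- remains a plan with a hole that you yourself flag at the end.

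Two concrete problems with the plan. First, the proposed reduction to Zelevinsky's unlinked-segments criterion rests on the claim that pairwise irreducibility of the $\langle\mathfrak{m}_i\rangle\times\langle\mathfrak{m}_j\rangle$ forces the segments occurring across the various $\mathfrak{m}_i$ to be pairwise unlinked. This is false: a product of two irreducible non-segment representations can be irreducible even though some segment of $\mathfrak{m}_i$ is linked to some segment of $\mathfrak{m}_j$ --- this is precisely why the Lapid--M\'inguez-type irreducibility criteria are so much subtler than Zelevinsky's --- so the hypothesis of the conjecture does not place you in the unlinked situation and Zelevinsky's criterion does not apply. Second, the alternative Jacquet-module route (propagate $(\otimes_i\sigma_i)^w\in J_M(\pi)$ over generators of $W_M$ using co-rank one irreducibility) is exactly the argument of the paper's Main Theorem, but that argument relies crucially on the inducing data being supercuspidal, so that the relevant part of $J_M(\pi)$ consists only of $W_M$-conjugates of the inducing datum and Lemma \ref{key2} controls the intertwining operators via rank-one Plancherel measures. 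For general irreducible $\sigma_i$ the geometric lemma produces many terms in $J_M(\times_i\sigma_i)$ that are not of the form $(\otimes_i\sigma_i)^w$, and the pairwise hypothesis does not obviously control them; this is where the square-irreducibility (``real''/``prime'') technology is genuinely needed, and your sketch invokes it without supplying it. As it stands, the proposal establishes only the easy direction.
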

The first observation comes from the structure theory of regular generalized principal series (cf. \cite{luo2018R,rodier1981decomposition}) which roughly says that if the supercuspidal support of our induction data is regular, then the above conjecture holds. To be more precise, write the supercuspidal support of $\sigma_i$ as 
\[\{\tau_{i,k} \}_{k=1}^{l_i}\mbox{ with } \tau_{i,k}\mbox{ supercuspidal}. \] 
Then our simple observation can be stated as follows.
\begin{lem}\label{regular}
	Assume $\{\tau_{i,k} \}_{i,k}$ is regular, i.e. $\otimes_{i,k}\tau_{i,k}$ is regular. Then Conjecture \ref{maxim} holds.
\end{lem}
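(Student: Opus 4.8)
We outline a proof, treating the two implications separately and isolating the one step that uses the regularity hypothesis. The implication (i) $\Rightarrow$ (ii) needs nothing about the supercuspidal support: for a pair $i\neq j$ in $I$, induction in stages presents $\times_i\sigma_i$ as $Ind_{Q}^{GL_n}\!\big((\sigma_i\times\sigma_j)\otimes\bigotimes_{m\in I\setminus\{i,j\}}\sigma_m\big)$, where $Q$ is the standard parabolic with Levi $GL_{n_i+n_j}\times\prod_{m\neq i,j}GL_{n_m}$. Since parabolic induction is exact and carries nonzero representations to nonzero ones, a proper nonzero subrepresentation of $\sigma_i\times\sigma_j$ would yield one of $\times_i\sigma_i$, so irreducibility of the latter forces irreducibility of every $\sigma_i\times\sigma_j$.

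For (ii) $\Rightarrow$ (i) I would first record what regularity means over $GL_n$. Write $\tau_{i,k}$ for a supercuspidal representation of $GL_{d_{i,k}}$ with $\sum_k d_{i,k}=n_i$, and put $M_0:=\prod_{i,k}GL_{d_{i,k}}$. A nontrivial element of the relative Weyl group of $M_0$ fixing $\otimes_{i,k}\tau_{i,k}$ must transpose two isomorphic $\tau$'s lying in $GL$-factors of equal size, so regularity is equivalent to the $\tau_{i,k}$ being pairwise non-isomorphic. Consequently each $\sigma_i$ occurs with multiplicity one in any ordering of $\tau_{i,1}\times\cdots\times\tau_{i,l_i}$, the supercuspidal supports of distinct $\sigma_i$ are disjoint, and $\times_i\sigma_i$ embeds, after a suitable reordering of the $\tau$'s, into the multiplicity-free induction $\Pi:=Ind_{P_0}^{GL_n}(\otimes_{i,k}\tau_{i,k})$, whose composition series is governed entirely by the linkages among the pairwise distinct $\tau_{i,k}$ (Theorem \ref{sils} and the structure theory of regular generalized principal series, cf. \cite{rodier1981decomposition,luo2018R}).

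Next I would transport the Jacquet-module argument of the proof of the Main Theorem to the Levi $M=\prod_iGL_{n_i}$, descending to $M_0$ whenever multiplicity-freeness must be invoked. Regularity makes the stabilizer of $\otimes_i\sigma_i$ in $W_M$ trivial and all relevant Jacquet modules multiplicity-free, so $\times_i\sigma_i$ is irreducible precisely when, for the unique constituent $\pi$ with $\otimes_i\sigma_i\leq J_M(\pi)$, every twist $(\otimes_i\sigma_i)^w$ with $w\in W_M$ reappears in $J_M(\pi)$. For $GL_n$ one has $W_M=W_M^0=\langle w_{ij}:n_i=n_j\rangle$---hence $W_M^1=\{1\}$ in the notation of Lemma \ref{key1}---and $w_{ij}$ simply interchanges the $i$-th and $j$-th tensor factors; inducing in stages through $GL_{n_i+n_j}\times\prod_{m\neq i,j}GL_{n_m}$ reduces the requirement at $w_{ij}$ to the irreducibility of $\sigma_i\times\sigma_j$, which is part of (ii). The pairs with $n_i\neq n_j$ contribute no Weyl element, but any linkage they could create is already present inside $\Pi$ at the supercuspidal level and is excluded by the corresponding instance of (ii).

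The main obstacle is precisely this last bookkeeping: one must verify that the full family of co-rank-one conditions in (ii)---including those for unequal block sizes---exactly closes the gap between the subquotient $\times_i\sigma_i$ and the (possibly reducible) ambient $\Pi$, i.e. that a single linked pair already forces reducibility while the listed unlinked pairs suffice for irreducibility. This is where regularity is genuinely used, through the rigidity of the regular-case intertwining operators already exploited in \cite{luo2018R}; alternatively, over $GL_n$ one may appeal to Zelevinsky's product theorem, under which both (i) and (ii) unwind to the pairwise unlinkedness of the multisegments attached to the $\sigma_i$.
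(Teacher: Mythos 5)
Your overall strategy is sound and, at bottom, rests on the same external input as the paper, but you reach it by a much longer road. The paper's proof is simply two applications of \cite[Theorem 3.7]{luo2018R}: first on $GL_{n_i+n_j}$, where the regular-case criterion converts the hypothesis that $\sigma_i\times\sigma_j$ is irreducible into the irreducibility of all supercuspidal pairs $\tau_{i,k_1}\times\tau_{j,k_2}$; and then on $GL_n$, where the same criterion (applied to the constituent $\otimes_i\sigma_i$ of the regular generalized principal series on $M$) upgrades those pairwise supercuspidal irreducibilities to the irreducibility of $\times_i\sigma_i$. Everything you set up before that point --- the multiplicity-free ambient induction $\Pi$, the observation that $W_M=W_M^0$ and $W_M^1=\{1\}$ for $GL_n$ Levis, the transport of the Jacquet-module argument --- is correct but dispensable once that theorem is invoked as a black box; indeed the ``last bookkeeping'' step you flag as the main obstacle is precisely the content of \cite[Theorem 3.7]{luo2018R}, so your sketch is only closed by that citation, exactly as in the paper. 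Two smaller remarks: your easy direction (i) $\Rightarrow$ (ii) via induction in stages and exactness is fine and is what the paper dismisses as ``obvious''; on the other hand, your proposed alternative via ``Zelevinsky's product theorem'' is shakier than you suggest, since that theorem characterizes irreducibility of products of single-segment representations $Z(\Delta_1)\times Z(\Delta_2)$, whereas here the $\sigma_i$ are general $Z(\mathfrak{m}_i)$ with regular support, and the assertion that irreducibility of $\sigma_i\times\sigma_j$ is equivalent to pairwise unlinkedness of the underlying segments is essentially the statement being proved, so leaning on it risks circularity.
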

\begin{proof}
	The necessary part is obvious, one only has to prove the sufficient part. If $\sigma_i\times\sigma_j$ is irreducible, then 
	\[\tau_{i,k_1}\times \tau_{j,k_2} \mbox{ is irreducible for all }k_1,~ k_2, \]
	which follows from \cite[Theorem 3.7]{luo2018R}). Applying \cite[Theorem 3.7]{luo2018R} again, one knows that 
	\[\times_i\sigma_i \mbox{ is irreducible},\]
	whence Conjecture \ref{maxim} holds.
\end{proof}
\begin{rem}
	If one replaces the condition ``$\sigma_i\times \sigma_j$ is irreducible'' by ``co-rank one induction is irreducible'' in Conjecture \ref{maxim}, it is easy to see that the Lemma \ref{regular} still holds for general connected reductive groups as \cite[Theorem 3.7]{luo2018R} applies to such generality.
\end{rem}
The second observation comes from a ``product formula'' in \cite{luo2018C} which roughly says that if the co-rank one reducibility and the Knapp--Stein $R$-group conditions lie in a Levi subgroup $L$ of $G$, then 
\[\#JH(Ind^L_{L\cap P}(\sigma))=\#JH(Ind^G_P(\sigma)), \]
where $\sigma$ is a supercuspidal representation of the Levi subgroup $M$ of $P$. 

To be precise, denote by $\Theta_Q$ the associated subset of $\Delta$ which determines the Levi subgroup $Q$ of $G$. We decompose $\Theta_L=\Theta_1\sqcup\cdots \sqcup \Theta_t $ into irreducible pieces, and accordingly $\Theta_M=\Theta_1^M\sqcup \cdots \sqcup \Theta_t^M$. Assume that $R_\sigma$ decomposes into $R_\sigma=R_1\times \cdots \times R_t$ with respect to the decomposition of $\Theta_L$, and a similar decomposition pattern holds for the co-rank one reducibility, i.e. co-rank one reducibility only occurs within $P_{\Theta_i}=M_{\Theta_i}N_{\Theta_i}$ for $1\geq i\leq t$. Then we have  
\begin{namedthm*}{Product formula}(cf. \cite[Corollary 2.2]{luo2018C})
	\[\#(JH(Ind^G_P(\sigma)))=\prod_{i=1}^{t}\#(JH(Ind_{M_{\Theta_i^M}}^{M_{\Theta_i}}(\sigma))).\]
\end{namedthm*}
So an easy corollary of the Product formula is
\begin{lem}\label{prodlem}
	Assume the decomposition pattern of the co-rank one reducibility and our revised $R$-group of the supercuspidal support data $\{\tau_{i} \}_{i}$ of $\sigma$ as above is exactly $L=M$, then Conjecture \ref{maxim} holds.
\end{lem}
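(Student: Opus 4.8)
The plan is to read the statement off from the Product formula \cite[Corollary 2.2]{luo2018C} by an induction-in-stages argument, using that the revised $R$-group is always trivial for $GL_n$. Throughout, let $\pi_0$ denote the supercuspidal support of $\otimes_i\sigma_i$, a supercuspidal representation of a Levi $M_0\subseteq GL_n$; writing the full support as $\{\tau_{i,k}\}$ with $k$ running over the support of $\sigma_i$, we have $\pi_0=\otimes_{i,k}\tau_{i,k}$. After reordering the inducing data we may arrange $M_0\subseteq M$, so that the corresponding standard parabolics satisfy $P_0\subseteq P$, and we write $P_0$ for the parabolic of $GL_n$ with Levi $M_0$.

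\emph{Reduction.} I would first observe that it suffices to show, under the hypothesis, that $\times_i\sigma_i$ is irreducible. Indeed, the necessity direction of Conjecture \ref{maxim} is the usual propagation of reducibility in $GL_n$: up to semisimplification the induced representation does not depend on the order of the $\sigma_i$, so $\times_i\sigma_i$ has the same Jordan--H\"older content as $\cdots\times(\sigma_i\times\sigma_j)\times\cdots$, and since parabolic induction is exact and sends nonzero smooth representations to nonzero ones, reducibility of some $\sigma_i\times\sigma_j$ would force reducibility of $\times_i\sigma_i$. Hence once $\times_i\sigma_i$ is shown irreducible, both sides of the equivalence in Conjecture \ref{maxim} hold and the conjecture is verified in this case.

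\emph{Applying the Product formula.} The hypothesis that the decomposition pattern of the co-rank one reducibility and the revised $R$-group of $\{\tau_{i,k}\}$ is exactly $L=M$ is precisely what is needed to apply the Product formula to $(GL_n,\,P_0,\,\pi_0)$ with intermediate Levi $L=M=\prod_i GL_{n_i}$: co-rank one reducibility of $\pi_0$ occurs only inside the blocks $P_{\Theta_i}$ with $M_{\Theta_i}=GL_{n_i}$, and the revised $R$-group of $\pi_0$ splits accordingly --- the latter being automatic, since for $GL_n$ the relative Weyl group of any Levi is generated by the reflections attached to transpositions of equal-size blocks, so $W_{\pi_0}=W_{\pi_0}^0$ and thus $R_{\pi_0}=\{1\}$. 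Writing $J_i:=Ind^{GL_{n_i}}(\otimes_k\tau_{i,k})$ for the induction of the $GL_{n_i}$-component of the supercuspidal support (a representation of $GL_{n_i}$ having $\sigma_i$ as a constituent) and $c_i:=\#(JH(J_i))$, the Product formula gives
\[\#\bigl(JH(Ind_{P_0}^{GL_n}(\pi_0))\bigr)=\prod_i c_i.\]

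\emph{Induction in stages and conclusion.} Since $P_0\subseteq P$, induction in stages yields $Ind_{P_0}^{GL_n}(\pi_0)\simeq Ind_P^{GL_n}(\otimes_i J_i)$, where $\otimes_i J_i\simeq Ind_{P_0\cap M}^M(\pi_0)$ is the external tensor product regarded as a representation of $M$. By the multiplicity-one property of parabolic induction from supercuspidal support in $GL_n$, each $J_i$, hence $\otimes_i J_i$, is multiplicity free, the latter having exactly $\prod_i c_i$ irreducible constituents, each of the form $\otimes_i\rho_i$ with $\rho_i\in JH(J_i)$ and one of them equal to $\otimes_i\sigma_i$. Exactness of parabolic induction then gives, in the Grothendieck group,
\[\#\bigl(JH(Ind_{P_0}^{GL_n}(\pi_0))\bigr)=\sum_{(\rho_i)_i,\ \rho_i\in JH(J_i)}\#\bigl(JH(Ind_P^{GL_n}(\otimes_i\rho_i))\bigr).\]
The left-hand side equals $\prod_i c_i$ by the previous step, while the sum on the right has exactly $\prod_i c_i$ terms, each at least $1$; therefore every term equals $1$. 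In particular $\times_i\sigma_i=Ind_P^{GL_n}(\otimes_i\sigma_i)$ is irreducible, which by the reduction step completes the proof. The one genuinely delicate point is the translation in the second step: one must check that the single hypothesis ``$L=M$'' unpacks into exactly the two separate hypotheses of the Product formula --- block-wise confinement of co-rank one reducibility and block-wise splitting of the $R$-group --- the second being vacuous for $GL_n$. Everything else is bookkeeping, the only substantive input being the multiplicity-one statement for $GL_n$, which is what upgrades ``length of a product $=$ product of lengths'' from an inequality (which alone would merely bound the length of $\times_i\sigma_i$ from above) to an equality.
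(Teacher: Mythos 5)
The paper offers no written proof of this lemma---it is asserted as ``an easy corollary of the Product formula''---and your argument is exactly the intended one: apply the Product formula to the supercuspidal support $\pi_0$ with intermediate Levi $L=M$, compare with induction in stages through $M$, and conclude by counting. Your observation that the $R$-group condition in the Product formula is automatic for $GL_n$ (every element of $W_{\pi_0}$ is a product of transpositions of isomorphic blocks, so $W_{\pi_0}=W_{\pi_0}^0$ and $R_{\pi_0}=\{1\}$) is correct and worth making explicit, as is the reduction of the ``necessity'' direction to commutativity of the Grothendieck ring of the $GL_n$'s.

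There is, however, one genuinely wrong ingredient: the ``multiplicity-one property of parabolic induction from supercuspidal support in $GL_n$'' that you single out as the only substantive input. That statement is false. Take $\tau$ cuspidal on $GL_m$ and consider $\Pi=\tau\times\tau\times\tau\nu\times\tau\nu$ on $GL_{4m}$. Writing $[\tau\times\tau\nu]=[A]+[B]$ with $A=Z([\tau,\tau\nu])$ and $B=Z([\tau]+[\tau\nu])$, commutativity of the Grothendieck ring gives $[\Pi]=[A\times A]+2[A\times B]+[B\times B]$, so every constituent of $A\times B$ occurs in $\Pi$ with multiplicity at least $2$, while Zelevinsky's classification allows only three distinct constituents. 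So products of cuspidals need not be multiplicity free. Fortunately your proof does not need this: if you read $\#(JH(-))$ throughout as the length (Jordan--H\"older count \emph{with} multiplicity), then $\ell(\otimes_i J_i)=\prod_i\ell(J_i)$ holds unconditionally for an external tensor product over $\prod_i GL_{n_i}$, exactness of induction gives $\ell(Ind_{P_0}^{GL_n}(\pi_0))=\sum_\rho \ell(Ind_P^{GL_n}(\rho))$ with $\rho$ running over $JH(\otimes_i J_i)$ counted with multiplicity, and the Product formula forces each of the $\prod_i c_i$ summands (one of which is $\ell(\times_i\sigma_i)$, since $\otimes_i\sigma_i\in JH(\otimes_i J_i)$ by definition of supercuspidal support) to equal $1$. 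Delete the multiplicity-one claim, state that all counts are lengths, and the argument is complete; if instead you insist on counting distinct constituents, the displayed identity between the two sides can genuinely fail and the proof breaks.
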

\begin{rem}
	We learned that Conjecture \ref{maxim} is now a theorem of Gurevich.
\end{rem}
Let $\sigma$ be an irreducible admissible representation of $M$ of the parabolic subgroup $P=MN$ of $G$, we define the revised group $R_\sigma$ of $\sigma$ in $G$ to be our revised $R$-group of its supercuspidal support. Inspired by Conjecture \ref{maxim} and the above two observations, we would like to propose an ambitious conjecture for connected reductive groups in the following.
\begin{conj}\label{gmaxim}
	Keep the notions as above. Assume $R_\sigma=\{1\}$. Then the following are equivalent 
	\begin{enumerate}[(i)]
		\item $Ind^G_P(\sigma)$ is irreducible.
		\item the co-rank one induction is irreducible, i.e. $Ind^{M_\alpha}_{M_\alpha\cap P}(\sigma)$ is irreducible for any $\alpha\in \Phi_M$.
	\end{enumerate} 
\end{conj}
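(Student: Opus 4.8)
The plan is to reduce the conjecture to our Main Theorem by descending to the supercuspidal support of $\sigma$ and then to control the passage back up to $\sigma$ itself. Write the supercuspidal support of $\sigma$ as a supercuspidal representation $\rho$ of a Levi subgroup $M_0\subseteq M$ (hence of a Levi subgroup $M_0\subseteq G$), let $Q=M_0U_Q$ be the corresponding parabolic of $M$ and $P_0=M_0N_0$ the corresponding parabolic of $G$, so that $\sigma$ is a constituent of $Ind^M_{Q}(\rho)$ and, by transitivity of induction, $Ind^G_P(\sigma)$ is a constituent of $Ind^G_{P_0}(\rho)$. By definition $R_\sigma=R_\rho$, which is trivial by hypothesis. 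The necessity direction is elementary and does not use the hypothesis on $R_\sigma$: if some $Ind^{M_\alpha}_{M_\alpha\cap P}(\sigma)$ were reducible, with $\tau$ a nonzero proper subrepresentation, then exactness of parabolic induction together with transitivity $Ind^G_{P_\alpha}\circ Ind^{M_\alpha}_{M_\alpha\cap P}=Ind^G_P$ makes $Ind^G_{P_\alpha}(\tau)$ a nonzero proper subrepresentation of $Ind^G_P(\sigma)$.

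For the sufficiency, assume all $Ind^{M_\alpha}_{M_\alpha\cap P}(\sigma)$, $\alpha\in\Phi_M$, are irreducible. Fix a constituent $\pi$ of $Ind^G_P(\sigma)$; choosing $\pi$ suitably, Frobenius reciprocity shows $\sigma$ appears in $J_M(\pi)$. Exactly as in the proof of the Main Theorem, $Ind^G_P(\sigma)$ is irreducible if and only if every twist $\sigma^w$, $w\in W_M$, occurring in $J_M(Ind^G_P(\sigma))$ already occurs in $J_M(\pi)$ with the appropriate multiplicity, which one splits into a multiplicity step ($\sigma$ occurs in $J_M(\pi)$ with the full multiplicity of its occurrence in $J_M(Ind^G_P(\sigma))$) and an orbit step ($\sigma^w\in J_M(\pi)$ for a set of representatives of $W_M/W_\sigma$, where $W_\sigma:=\{w\in W_M:w.\sigma\simeq\sigma\}$). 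For the orbit step the decomposition $W_M=W_M^0\rtimes W_M^1$ of Lemma \ref{key1} should again do the work: the $W_M^0$-part should reduce, after conjugation by a suitable element of $W_M$, to the behaviour of the co-rank one inductions along the lines of Lemma \ref{regular} and of the necessity part of the Main Theorem, while the $W_M^1$-part should follow from the exact analog of Lemma \ref{key2}, namely $Ind^G_P(\sigma)^{w_1}\simeq Ind^G_P(\sigma)$ for $w_1\in W_M^1$, proved by the same intertwining-operator identity; here it is essential that the hypothesis asks for irreducibility of $Ind^{M_\alpha}_{M_\alpha\cap P}(\sigma)$ for \emph{all} $\alpha\in\Phi_M$, not only for $\alpha\in\Phi_M^0$, since this is what guarantees the factors $j_\alpha(\sigma)$ for $\alpha\in\Phi_M(P)\cap\Phi_M(\overline{P^{w_1}})$ are finite and nonzero in the absence of supercuspidality. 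For the multiplicity step, in the Main Theorem one descends to the Levi subgroup $M_{\Delta_1}$ on which the induction becomes essentially unitary after an unramified twist and invokes the unitary $R$-group theory (Theorem \ref{silu}); since $\sigma$ need not be unitary this must be replaced by an argument at the level of the supercuspidal support $\rho$, where unitarity up to an unramified twist does hold, combining the Main Theorem for $\rho$ with the Product formula and the triviality of $R_\sigma=R_\rho$.

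The crux, and the main obstacle, is precisely this passage between $\sigma$ and its supercuspidal support: $\sigma$ is only one constituent of $Ind^M_{Q}(\rho)$, so the reducibility of $Ind^G_P(\sigma)$ is not simply read off from that of $Ind^G_{P_0}(\rho)$ via the Main Theorem; one must determine which constituents of $Ind^G_{P_0}(\rho)$ carry $\sigma$, rather than another constituent of $Ind^M_{Q}(\rho)$, in their $M_0$-Jacquet modules, and with what multiplicity. This requires a careful geometric-lemma bookkeeping disentangling the internal reducibility of $Ind^M_{Q}(\rho)$ from the external $W_M$-reducibility, exactly the kind of combinatorial analysis that Jantzen carries out in \cite[Theorem 3.3]{jantzen1996reducibility} for essentially discrete series of Levi subgroups of $Sp_{2n}$ and $SO_{2n+1}$, where $R_\sigma$ is automatically trivial. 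Extending that analysis to arbitrary connected reductive $G$ and arbitrary irreducible admissible $\sigma$, without the structural tools special to the classical groups, is what remains; we expect to adapt Jantzen's argument, organized by the decompositions $W_M=W_M^0\rtimes W_M^1$ and $R'_{\sigma_\nu}=R_{\sigma_\nu}^0\rtimes R_{\sigma_\nu}$ of Lemmas \ref{key1} and \ref{rgp}, in future work.
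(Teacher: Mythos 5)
This statement is Conjecture \ref{gmaxim}; the paper does not prove it, but only records supporting special cases (the supercuspidal case via the Main Theorem, and Jantzen's theorem for essentially discrete series on $Sp_{2n}$ and $SO_{2n+1}$), so there is no proof of the paper's to compare yours against. Your necessity direction is fine and standard: exactness and transitivity of parabolic induction turn a proper nonzero subrepresentation of $Ind^{M_\alpha}_{M_\alpha\cap P}(\sigma)$ into one of $Ind^G_P(\sigma)$. But the sufficiency direction, which is the entire content of the conjecture, is not proved in your proposal — you say so yourself in the final paragraph — and the gap you defer to ``future work'' is precisely the open problem. Concretely: once you pass to the supercuspidal support $\rho$ of $\sigma$ on $M_0\subseteq M$, the representation $\sigma$ is only one constituent of $Ind^M_Q(\rho)$, and neither the multiplicity step nor the orbit step of the Main Theorem's Jacquet-module argument goes through, because both require knowing how the various $W_M$-translates $(\rho)^w$ in $J_{M_0}(Ind^G_{P_0}(\rho))$ distribute among the constituents of $Ind^G_P(\sigma)$ as opposed to the inductions of the \emph{other} constituents of $Ind^M_Q(\rho)$. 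No mechanism is offered for this bookkeeping beyond an appeal to Jantzen's analysis, which is specific to classical groups and discrete series.

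Two further points where the sketch would need real work rather than analogy. First, the analogue of Lemma \ref{key2} for non-supercuspidal $\sigma$ is not automatic: the proof of Lemma \ref{key2} rests on Silberger's result that $j_\alpha(\sigma\otimes\nu)\neq 0,\infty$ for supercuspidal data with $\alpha\notin\Phi_M^0$, and your assertion that irreducibility of the co-rank one inductions $Ind^{M_\alpha}_{M_\alpha\cap P}(\sigma)$ for all $\alpha\in\Phi_M$ forces the relevant $j$-factors to be finite and nonzero is itself an unproved claim (irreducibility of a co-rank one induction does not by itself rule out a zero or pole of the normalized intertwining operator at that point). Second, the multiplicity step cannot simply cite Theorem \ref{silu} at the level of $\rho$ and the Product formula, because Lemma \ref{prodlem} carries the hypothesis that the reducibility pattern decomposes along $L=M$, which is an additional assumption not present in Conjecture \ref{gmaxim}. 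So the proposal is a reasonable road map consistent with how the paper motivates the conjecture, but it is not a proof, and the conjecture remains open in the generality stated.
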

In what follows, let us discuss some supportive examples. 
\begin{exam}
	If $\sigma$ is supercuspidal, then Conjecture \ref{gmaxim} follows from the Main Theorem, i.e. Muller type irreducibility criterion.
\end{exam}	
For classical groups, we know that $R_\sigma$ is always trivial. One may check also that $R_\sigma$ is trivial for Chevalley groups of types $B_n$ and $C_n$ (cf. \cite{keys1982decomposition}).
\begin{exam}
Let $G$ be a classical group, under the assumption of Lemma \ref{prodlem} with $L=GL$-part of $M$, then Conjecture \ref{maxim} implies Conjecture \ref{gmaxim}. 
\end{exam}
\begin{exam}
	If $G=Sp_{2n}$ or $SO_{2n+1}$, and $\sigma$ is an essentially discrete series representation of $M$, this is a well-known result of Jantzen (see \cite[Theorem 3.3]{jantzen1996reducibility}). In view of Jantzen's argument for $Sp_{2n}$ and $SO_{2n+1}$, one could expect to extend his result to other classical groups and Chevalley groups of type $B_n$ and $C_n$, thus confirm our Conjecture \ref{gmaxim} for those cases.
\end{exam}
Indeed, inspired by Jantzen's beautiful argument in \cite{jantzen1996reducibility}, we would like to check our Conjecture \ref{gmaxim} for $\sigma$ essentially discrete series in our future work.

\bibliographystyle{abbrv}
\bibliography{ref}

			
\end{document}